\DeclareSymbolFont{SY}{U}{psy}{m}{n}
\DeclareMathSymbol{\emptyset}{\mathord}{SY}{'306}
\DeclareMathOperator*{\slim}{s-lim} 
 \DeclareMathOperator{\tr}{tr}
\DeclareMathOperator{\les}{\lambda_{\rm ess}}
\DeclareMathOperator{\spec}{spec}
\DeclareMathSymbol{\newtimes}{\mathbin}{SY}{'264}
\title[Relative convergence estimates for the Large Coupling Limit]{
Relative convergence estimates for the spectral asymptotic in the Large Coupling Limit
}
\author{Luka Grubi\v si\' c}
\address{
Department of Mathematics, University of Zagreb, Bijeni\v{c}ka cesta 30, 10000 Zagreb, Croatia}
\thanks{This work is based on a part of author's PhD thesis \cite{GruPhd}, which was written
under the supervision of Prof. Dr. Kre\v{s}imir Veseli\'{c}, Hagen in partial
fulfilment of the requirements for the degree Dr. rer. nat. The paper has been written at the time when the author was
a member of ``Institut f{\"u}r reine und angewandte Mathematik'',
RWTH Aachen University. It has been submitted for a review since February 2008.
}
\email{luka.grubisic@math.hr}
\date{\today}
\begin{document}
%
%
\def\ra{{\sf R}}
\def\je{{\sf N}}
\renewcommand{\Im}{{\ensuremath{\mathrm{Im\,}}}}
\renewcommand{\Re}{{\ensuremath{\mathrm{Re\,}}}}

\newcommand{\diag}{\mathrm{diag}}
\def\fr{\mathfrak{r}}
\def\fh{\mathfrak{h}}
\def\fa{\mathfrak{a}}
\def\fm{\mathfrak{m}}
\def\fk{\mathfrak{k}}
\def\fu{\mathfrak{u}}
\def\fx{\mathfrak{x}}
\def\fv{\mathfrak{v}}
\def\fw{\mathfrak{w}}
\def\fs{\mathfrak{s}}
\def\fG{\mathfrak{g}}
\def\fI{\mathfrak{I}}
\def\fE{\mathfrak{E}}
\def\fC{\mathfrak{C}}
\def\I{\mathbf{I}}
\def\mA{\mathbf{A}}
\def\mB{\mathbf{B}}
\def\mM{\mathbf{M}}
\def\mU{\mathbf{U}}
\def\mX{\mathbf{X}}
\def\mV{\mathbf{V}}
\def\mW{\mathbf{W}}
\def\mY{\mathbf{Y}}
\def\mK{\mathbf{K}}
\def\mR{\mathbf{R}}
\def\mQ{\mathbf{Q}}
\def\mT{\mathbf{T}}
\def\mS{\mathbf{S}}
\def\mH{\mathbf{H}}
\def\x{\mathcal{X}}
\def\q{\mathcal{Q}}
\def\b{\mathcal{B}}
\def\g{\mathcal{G}}
\def\d{\mathcal{D}}
\def\K{\mathcal{K}}
\def\H{\mathcal{H}}
\def\vp{\mathcal{V}}
\def\wp{\mathcal{W}}
\def\lp{\mathcal{L}}
\def\EE{\mathcal{E}}
\def\SS{\mathcal{S}}
\def\cT{\mathcal{T}}
\def\y{\mathcal{Y}}
\def\z{\mathcal{Z}}
\def\D{\mathcal{D}}
\def\R{ \mathbb{R}}
\def\C{ \mathbb{C}}
\def\N{ \mathbb{N}}
%
%
\newtheorem{theorem}{Theorem}[section]
\newtheorem{corollary}[theorem]{Corollary}
\newtheorem{lemma}[theorem]{Lemma}
\newtheorem{proposition}[theorem]{Proposition}
\theoremstyle{definition}
\newtheorem{remark}[theorem]{Remark}
\newtheorem{assumption}[theorem]{{\rm \textbf{Assumption}}}
\newtheorem{example}[theorem]{Example}
\newtheorem{problem}{{\rm \textbf{Problem}}}[section]
\newtheorem{definition}[theorem]{Definition}
\numberwithin{equation}{section}
%
%
\newcommand{\norm}[1]{\Vert #1 \Vert}
\newcommand{\normm}[1]{\Vert #1 \Vert}
\newcommand{\ugl}[1]{\left[ #1 \right]}
\newcommand{\sk}[1]{\left( #1 \right)}
\newcommand{\dual}[1]{\left< #1 \right>}
\newcommand{\abs}[1]{\left| #1 \right|}
\newcommand{\abss}[1]{\vert #1 \vert}
\newcommand{\spa}[1]{\mbox{span}\{ #1 \}}
\newcommand{\absf}[2]{\frac{\abss{#1}}{\abss{#2}} }
\newcommand{\ine}[1]{{\mathbf{#1}}}
\newcommand{\conpo}[1]{\stackrel{#1}{\longrightarrow}}
%
%
\def\imag{{\rm i}}
\def\sinbf{\mathsf{sin}}
\def\cosbf{\mathsf{cos}}
\def\eexp{\text{e}}
\def\region{\mathcal{R}}
\def\wlim{\text{{\rm w-lim}}}
\def\tripleb{\mid\!\mid\!\mid}

\begin{abstract}
We prove optimal convergence estimates for eigenvalues
and eigenvectors of a class of singular/stiff perturbed problems. Our profs are constructive
in nature and use (elementary) techniques which are of current interest in
computational Linear Algebra to obtain estimates even
for eigenvalues which are in gaps of the essential spectrum. Further,
we also identify a class of ``regular'' stiff perturbations with (provably)
good asymptotic properties. The Arch Model from the theory of elasticity is
presented as a prototype for this class
of perturbations. We also show that we are able to study model problems which do not satisfy this
regularity assumption by presenting a study of a Schroedinger operator with singular obstacle potential.
\end{abstract}

\maketitle

\section{Introduction}\label{s:prvo}
In this paper we give sharp estimates for the asymptotic behavior of the spectral problem for the family of self-adjoint operators
$\mH_\kappa$ which are defined by positive definite quadratic forms
\begin{equation}\label{eq:def}
\fh_\kappa(u, v)=\fh_b(u, v) +\kappa^2 \fh_e(u, v), \qquad u, v\in\q(\fh_b)\subset\q(\fh_e).
\end{equation}
Here we have used $\q(\fh_b)$ and $\q(\fh_e)$ to denote the domain of definition of  $\fh_b$ and $\fh_e$
and we assume that $\kappa^2\to\infty$. Qualitative results for
families of self-adjoint operators like $\mH_\kappa$ have a long tradition. We are
particularly influenced by the results from \cite{SanchezPalencia90,WeidmannScand84}.
Here by qualitative results we mean those results
which prove (e.g.) that the spectral projections $E_\kappa(\cdot)$, $\mH_\kappa=\int\lambda~dE_\kappa(\lambda)$ converge
in some appropriate sense.

To give a first idea of what is hidden within the abstract formulation (\ref{eq:def}) let us consider
two simple examples that are representative for more complex model problems (studied later on in
Section \ref{s:Arch}). The family of quadratic forms
\begin{equation}\label{e:heat}
\fh_\kappa(u,v)=\int_{0}^2 u'v'~dx +\kappa^2\int_{1}^2 u'v'~dx,
\quad u,v\in H^1_0[0,2],\;\;\kappa\to\infty
\end{equation}
is paradigmatic for a regularly perturbed family, whereas the family
\begin{equation}\label{e:GL}
\fh_\kappa(u,v)=\int_{0}^2 u'v'~dx +\kappa^2\int_{1}^2 uv~dx,
\quad u,v\in H^1_0[0,2],\;\;\kappa\to\infty
\end{equation}
is representative for the quadratic forms which violate our new regularity assumption. Note that
in our relative theory the unbounded perturbation $\fh_e$ in (\ref{e:heat}) is preferable to the
bounded perturbation $\fh_e$ in (\ref{e:GL}).
Here we have used $H^1_0(\cdot)$ to denote the standard Sobolev spaces.

The limit of the families like (\ref{e:heat}) and (\ref{e:GL}) can be a non-densely defined operator
and we use the theory of \cite{WeidmannScand84} to study the convergence of such $\fh_\kappa$ and
associated $\mH_\kappa$ as $\kappa\to\infty$.
Let now the operator $\mH_\infty$ (in general non-densely defined) be the limit
(in the sense of \cite{WeidmannScand84}) of $\mH_\kappa$ as $\kappa\to\infty$ .
We use $\lambda_i^\kappa$, $i\in\N$ to denote the discrete eigenvalues of
$\mH_\kappa$, which are below the
infimum of the essential spectrum and are ordered in the ascending order according to multi\-pli\-city.
By $v_i^\kappa\in\q(\fh_\kappa)$,
$\mH_\kappa v_i^\kappa=\lambda_i^\kappa v_i^\kappa$ and $\|v_i^\kappa\|=1$
we denote accompanying eigenvectors. Here we allow $\kappa>0$ or formally $\kappa=\infty$.
Using the perturbation techniques from \cite{Gru05_3,Gru03_3,GruVes02,Gru_Ves_Sylv}
we prove (among other results) in the case of regular family
of the type (\ref{eq:def}); for a definition see Section \ref{sec:reg} below; the estimates
\begin{align}\label{eq:1}
\frac{\text{lb}}{\kappa^2}\leq\frac{|\lambda_i^\kappa-\lambda_i^\infty|}{\lambda_i^\infty}&\leq\frac{\text{ub}_1}{\kappa^2}\\
\frac{\text{lb}}{\kappa^2}\leq\frac{\fh_\kappa[v_i^\infty-v_i^\kappa]}{\fh_\kappa[v_i^\kappa]}&\leq\frac{\text{ub}_2}{\kappa^2}\\
\|E_\kappa(D)-E_\infty(D)\|&\leq \frac{\text{ub}_3}{\kappa^2},\qquad D\in\R\setminus\spec(\mH_\infty),\label{eq:3}
\end{align}
and we compute the constants $\text{lb}$ and $\text{ub}_i$, $i=1,2,3$ explicitly for several
concrete model problems. Further, we also give a formula for determining a critical $\kappa_0$ such that
(\ref{eq:1})--(\ref{eq:3}) hold for $\kappa\geq\kappa_0$ and we show that the estimates are optimal in the sense
that
$
\lim_{\kappa\to\infty}\frac{|\lambda_i^\kappa-\lambda_i^\infty|}{\lambda_i^\infty}\Big(\frac{\text{lb}}{\kappa^2}\Big)^{-1}=1
$
holds.

To show that our abstract approach to problems (\ref{eq:def}) does not incur accuracy tradeoffs ---when applied to concrete problems---we
consider several case studies. A prototype for the (less trivial)
regular problem is the Arch Model from e.g. \cite[Chapter 8.8:3]{CiarletV4y78}.
In our case study we compute explicit estimates for the asymptotic behavior of the eigenvalues
and spectral projections of the low frequency problem as the diameter of the arch goes to zero.
The limit of such family of arches is the
so called Curved Rod Model from \cite{JurakTambaca,Tambaca1D}. On the other hand,
Schroedinger (like) operators from \cite{BaumDemuth,BruneauCarbou,DemuthJeskeKirsch} are
representative for (higher dimensional) operators which have less ``well-behaved'' spectral asymptotic.
More to the point, in the case of the Schroedinger (like) operator from (\ref{e:GL}) we obtain
the same optimality statements, but the convergence is of the fractional order
$O\big(\frac{1}{\kappa^{2\alpha}}\big)$, $\alpha=\frac{1}{2}$
(~cf. \cite{DemuthJeskeKirsch,Gru05_3} for higher dimensional problems in unbounded domains).
These concrete examples determine a framework for presenting our (otherwise) more abstract results.
\subsection{Local (resolvent) estimates}
We approach this analysis by reformulating the convergence problem so that the
perturbation framework and the error representation formulae (this is the main constructive feature of our framework)
from \cite{DrmHAri97,Gru05_3,Gru03_3,GruVes02,Gru_Ves_Sylv} can be applied as a backbone of our construction.
A difference between our approach and the standard results of works like
\cite{BruneauCarbou,Dancer,DemuthJeskeKirsch,Panasenko} can best be seen when considering a way to compute a
constant $\text{ub}_3$ for an estimate like (\ref{eq:3}). The standard approach requires a study of the
integral
\begin{equation}\label{eq:diffi}
\oint_{\fC(\lambda_i^\infty)}\Big[(\zeta-\mH_\infty)^{-1}P_{\je(\fh_e)}-(\zeta-\mH_\kappa)^{-1}\Big]~d\zeta,
\end{equation}
where $\fC(\lambda_i^\infty)$ is a circle in the resolvent set of $\mH_\kappa$ which has $\lambda_i^\infty$
in its interior and the rest of the spectrum in its exterior. This frequently leads to cumbersome estimation formulae.
Thanks to the local character of the error representation formula from \cite{Gru05_3},
we are able to base our theory on a study of the integrals\footnote{The notation
$(\cdot, \cdot)$ and $\|\cdot\|$ always refer to the scalar product and the norm of the
background Hilbert space $\H$. The functions of the operator like $\mH_e^{1/2}$ are always meant in the sense of
the spectral calculus. By $P_{\je(\fh_e)}$ we generically denote
the $\H$ orthogonal projection onto the space $\je(\fh_e)$}
\begin{equation}\label{eq:formulaD}
(v_i^\infty, \mH^{-1}_\kappa v_i^\infty)-(v_i^\infty, \mH^{-1}_\infty v_i^\infty)=
\int^\infty_{\kappa^2}\|\mH_e^{1/2}\mH_\tau^{-1}v_i^\infty\|^2~d\tau, \;\; i=1,\ldots,m.
\end{equation}
Here $\mH_e$ is the operator defined by $\fh_e$ in the sense of Kato and
$m\in\N$ is the multiplicity of $\lambda_i^\infty$. The results from \cite{Brasche,DemuthJeskeKirsch} show that the
integrals (\ref{eq:formulaD}) are better amenable for a quantitative study than are (\ref{eq:diffi}).

Due to the difficulties in dealing with a formula like (\ref{eq:diffi}),
typical results from semiclassical analysis from e.g. \cite{Dancer,SanchezPalencia90}
establish only the fact that the projections converge in a much weaker sense (than is the convergence
of spectral projections in norm)
without giving information on the speed of convergence as measured by the coupling $\kappa^2$. The nearest in
spirit to our analysis is the approach of \cite{Panasenko}. However, in this work only a particular family
of model problems is considered and no estimates for the convergence of $E_\kappa(\cdot)$
in (unitary invariant) operator norm(s) are presented. Furthermore, the authors do not discuss the
radius of convergence of their ``asymptotic''  expansions.
For the geometric theory on the relationship between two projections and the importance of
establishing convergence estimates for all unitary invariant operator norms we refer the reader to the
seminal works \cite{DavisKahan70,HalmosTwo}.

\subsection{A notion of regularity}\label{sec:reg}
Let us now make precise what we mean by the regularity of $\fh_e$. In the terminology of \cite{SanchezPalencia90} a family of the type (\ref{eq:def}) is said
to be \textit{non-inhibited stiff} if $\fh_e$ is a closed and positive quadratic form and the subspace
\begin{equation}\label{eq:stiff}
\je(\fh_e):=\{u\in\q(\fh_e)~:~\fh_e[u]:=\fh_e(u, u)=0\}
\end{equation}
(of $\H$) is nontrivial. For technical convenience we assume (~without reducing the level of the generality)
that $\fh_b$ is positive definite and
use $\mH_b$ and $\mH_e$ to denote the self-adjoint operators which are defined in the sense of Kato
by $\fh_b$ and $\fh_e$ respectively.

We identify the \textit{regular family} of quadratic forms---with structure (\ref{eq:def})---by requiring that $\fh_b$ and $\fh_e$
satisfy a Ladyzhenskaya--Babu\v{s}ka--Brezzi type condition
\begin{equation}\label{eq:BB}
\sup_{v\in \q(\fh_e)}\frac{|(q, \mH_e^{1/2}v)|}{\fh_b[v]^{1/2}}\geq\frac{1}{\fk}\|P_{\je(\fh_e)}q\|,\qquad q\in\H,
\end{equation}
for some $\fk$, $\fk>0$.
The condition (\ref{eq:BB}) is equivalent with the claim that $\ra(\mH_e^{1/2}\mH_b^{-1/2})$, the range
of the operator $\mH_e^{1/2}\mH_b^{-1/2}$, is closed in $\H$, cf. examples (\ref{e:heat}) and (\ref{e:GL}).
The ramifications of the assumption (\ref{eq:BB}) will enable us to formulate a new method for studying
integrals (\ref{eq:formulaD}) for this class of model problems and thus complement
the study of singular obstacle potentials from \cite{Brasche,DemuthJeskeKirsch}.


\subsection{An outline of the paper}
Let us finish the introduction by briefly outlining the structure of the paper. In Section \ref{section2}
we introduce the notation and present the qualitative convergence framework from \cite{WeidmannScand84}.
The main approximation results of the paper appear in Section \ref{section3}.
To be more precise in Section \ref{section31} we review the operator matrix approach to Ritz value estimation
from \cite{Gru_Ves_Sylv,Gru05_3}. In Section \ref{section32} this approach to spectral estimation is specialized
to the problems of the large coupling limit. In particular we make precise in which sense can these estimates
be considered sharp. We also revisit, in Section \ref{section321}, the example from \cite{Gru05_3}
to show how do (\ref{eq:1})--(\ref{eq:3}) look in praxis for a non-regular $\fh_e$.
In Section \ref{section4} we characterize regular perturbations $\fh_e$ and give convergence
estimates which utilize this additional structural information. In Section \ref{s:Arch} we consider a model problem from the elasticity theory
and show that its asymptotic behavior is regular. In the last section we put the results in the broader context
and give an outlook of further research.

At the end we would like to emphasize that our study is distinguished by its constructive character.
This can be seen in the fact that we
give a general method to compute the constants $\text{lb}$ and $\text{ub}_i$, $i=1,2,3$ (as functions
of $\mH_\kappa$ and $v^\infty_i$) in (\ref{eq:1})--(\ref{eq:3}). With such a result we give a method to establish both a first order correction
for the limit eigenvalue $\lambda_i^\infty$, as well as to assess
the quality of this approximation to $\lambda_i^\kappa$. The optimality result
is a justification of this claim.
For other connections between
the elementary linear algebra and spectral theory we refer the reader to \cite{sjoestrand-2003}.

\section{Convergence of non-densely defined quadratic forms}\label{section2}

In this section we fix the notation and give background information on the previous results which we use.
We follow the general notational conventions and the terminology of
Kato \cite[Chapters VI--VIII]{Kato76}. Minor differences are contained in the following list
of notation and terminology.
%
%
%
%
\begin{itemize}
\item $\H$ ... is an infinite dimensional Hilbert space, can be both real
  or complex
\item $(\cdot , \cdot)$; $\|\cdot\|$ ... the scalar product on $\H$, linear in the second
  argument and anti-linear (when $\H$ is complex) in the first; the norm on $\H$
\item $\H_1\oplus\H_2$... the direct sum of the Hilbert spaces $\H_1$ and $\H_2$, for any $x\in\H_1\oplus\H_2$
we have $x=x_1\oplus x_2=\begin{bmatrix}x_1\\x_2\end{bmatrix}$ for $x_i\in\H_i$, $i=1,2$
\item $\spec(\mH)$, $\spec_{ess}(\mH)$; $\les(\mH)$ ... the spectrum and the essential
  spectrum of $\mH$; the infimum of the essential spectrum of $\mH$
\item $A\leq B$ ... order relation between self-adjoint operators (matrices),
is equivalent with the statement that $B-A$ is positive
\item
$\lp(\H)$; $\lp(\H_1,\H_2)$... the space of bounded linear operators
on $\H$, which is equipped with the norm $\|\cdot\|$; the space of bounded linear
operators from $\H_1$ to $\H_2$
\item $\textsf{R}(X), \textsf{N}(X)$ ... the range and the null space of the linear operator $X$
\item $\mA^\dagger$ ... the generalized inverse of the closed densely
defined operator $\mA$. If $\mA$ has the closed range then
$\mA^\dagger=(\mA(\mA^*\mA)^{-1})^*$ is bounded, see \cite{Nashed76}. We will
extend this notion below to hold for non-densely defined
self-adjoint operators.
\item $P$, $P_\perp$... the orthogonal projections $P$ and $P_\perp:=\I-P$
\item $j_{(\cdot)}$ ... a permutation of $\N$
\item $\diag(M,W)$ ... the block diagonal operator matrix
with the operators $M,W$ on its diagonal. The operators $M,W$ can be
both bounded and unbounded. The same notation is used to define the
diagonal $m\times m$ matrix\\ $\diag(\alpha_1,\cdots,\alpha_m)$,
with $\alpha_1,\cdots,\alpha_m$ on its diagonal.
\item $s_1(A)\geq s_2(A)\geq\cdots$, $s_{\max}(A), s_{\min}(A)$ ...
the singular values of the compact operator $A$ ordered in the
descending order according to multiplicity, the minimal (if it
exists) and the maximal singular value of $A$
\item $\tripleb X \tripleb$ ... a unitary invariant or operator cross norm
of the operator $X$. Since $\tripleb\cdot\tripleb$ depends
only on the singular values of the operator, we do not notationaly
distinguish between the instances of the norm
$\tripleb\cdot\tripleb$ on $\lp(\H)$, $\lp(\ra(P))$,
$\lp(\ra(P),\ra(P)^\perp)$, or such. For details see \cite{SimonTrace}.
\item $\tr(X)$, $\tripleb X\tripleb_{HS}$ ... the trace a the Hilbert--Schmidt
norm of the operator $X$, it holds $\tripleb X\tripleb_{HS}=\sqrt{\tr(X^*X)}$, see \cite{SimonTrace}
\end{itemize}
As a general policy to simplify the notation we shall always drop
indices  when there in no danger of confusion.

Let us assume that we have a closed, symmetric and semibounded from below form
$\fh$ with the dense domain $\q(\fh)\subset\H$ as given
in \cite[(VI.1.5)--(VI.1.11), pp. 308--310]{Kato76}.
The form $\fh$ which has a strictly positive lower bound will be called
\textit{positive-definite}. This is also a small departure from the terminology
of \cite[Section VI.2, pp. 310]{Kato76}.
Such $\fh$ defines the self-adjoint and
positive definite operator $\mH$ in the sense of \cite[Theorem VI.2.23, pp. 331]{Kato76}.
Furthermore, the operator
$\mH$ is densely defined with the domain $\d(\mH)\subset\q(\fh)$ and $\d(\mH^{1/2})=\q(\fh)$.
We also generically assume that $\mH$ has discrete eigenvalues
$
\lambda_1(\mH)\leq\cdots\leq\lambda_m(\mH)\leq\cdots<\les(\mH),
$
where we count the eigenvalues according to multiplicity. Another departure from the terminology of Kato is that
we use $\fh(\psi, \phi)$ to denote the value of $\fh$ on $\psi, \phi \in\q(\fh)$, but we write $\fh[\psi]
:=\fh(\psi, \psi)$
for the associated \textit{quadratic form} $\fh[\cdot]$. We also
emphasize that we use $\cdot^*$ to denote the adjoint both in
the real as well as in the complex Hilbert space $\H$ as is customary in \cite[Chapters VI--VIII]{Kato76}.

In order to be able to handle the problems of the type (\ref{eq:def}), we shall need
to work with operators that are not necessarily densely defined,
cf. (\ref{e:heat}) and (\ref{e:GL}). We use the notion of the \textit{pseudo inverse}
of the operator $\mH$ that is assumed to be self-adjoint in the closure of its domain of definition
$\overline{\d(\mH)}^{~_{\|\cdot\|}}\subset\H$ (tacitly assumed to be a non-trivial subspace).
A definition from \cite{WeidmannScand84} will be used. The
\textit{pseudo inverse}
of the operator $\mH$ is the self-adjoint operator $\widehat{\mH}$ defined by
\begin{align*}
\d(\widehat{\mH})&=\ra(\mH)\oplus\d(\mH)^\perp,\\
\widehat{\mH}(u+v)&=\mH^{-1}u,\qquad u\in\ra(\mH),~v\in\d(\mH)^\perp.
\end{align*}
It follows that $\widehat{\mH}=\mH^{-1}$ in $\overline{\ra(\mH)}^{~_{\|\cdot\|}}$ and $\widehat{\mH}$ is
bounded if
and only if $\ra(\mH)$ is closed in $\H$.
When considered solely in $\overline{\d(\mH)}^{~_{\|\cdot\|}}$ the operator
$\mH$ is obviously self-adjoint, so we can also use the spectral calculus from \cite{Simon78-canonical}
to define the \textit{generalized inverse}, which extends the definition from the case of the densely defined operator,
 as
\begin{align*}
\mH^{\dagger}&=f(\mH),\qquad f(\lambda)=\begin{cases}0,&\lambda=0\\\frac{1}{\lambda},&\lambda > 0\end{cases}\\
\d(\mH^{\dagger})&=\{u\in\H~:~\int f^2(\lambda)d (E(\lambda)u, u)<\infty\},
\end{align*}
where $E(\cdot)= E_{\mH}(\cdot)P_{\d(\mH)}$. Obviously, we have $\d(\widehat{\mH})\oplus\je(\mH)=\d(\mH^{\dagger})$
and the identity $\mH^\dagger u=\widehat{\mH}u$, $u\in\d(\widehat{\mH}^{1/2})$ holds. In further text we shall tacitly drop
the notational distinction between the generalized and pseudo inverse.
The usual monotonicity properties can be extended to
the generalized inverse. In particular it holds
\begin{equation}\label{eq:order}
\|\mH^{1/2}_1\!u\|\leq\|\mH^{1/2}_2\!u\|, \;\; u\in\d(\mH_2^{1/2})
\Leftrightarrow\|\mH^{1/2\dagger}_2\!u\|\leq\|\mH^{1/2\dagger}_1\!u\|,\;\; u\in\d(\widehat{\mH}^{1/2}_1).
\end{equation}
This monotonicity principle is the main ingredient of the proof of the convergence result for (\ref{eq:def}). When dealing with non-densely defined forms
this principle can be formulated as follows. Let $\fh_1$ and $\fh_2$ be two closed positive definite forms and let
$\mH_1$ and $\mH_2$ be the self-adjoint operators defined by $\fh_1$ and $\fh_2$ in $\overline{\q(\fh_1)}$
and $\overline{\q(\fh_2)}$. We say $\fh_1\leq \fh_2$ when $\q(\fh_2)\subset\q(\fh_1)$ and
\begin{equation}\label{e_uredjaj}
\fh_1[u]=\|\mH_1^{1/2}u\|^2\leq \fh_2[u]=\|\mH_2^{1/2}u\|^2,\qquad u\in\q(\fh_2).
\end{equation}
Equivalently, we write $\mH_1\leq\mH_2$ when $\fh_1\leq \fh_2$. Now, we can write the fact (\ref{eq:order})
as
\begin{equation}\label{e_radniuredjaj}
\mH_1\leq\mH_2\Longleftrightarrow\mH_2^{\dagger}\leq\mH^{\dagger}_1.
\end{equation}Let us define, for non-inhibited (see definition (\ref{eq:stiff})) quadratic forms like $\fh_\kappa$ from (\ref{eq:def}), the domain
$
\q_\infty:=\{u\in\q~:~\lim_{\kappa\to\infty} \fh_\kappa[u]<\infty\}
$,
then according to \cite{Simon78-canonical,WeidmannScand84} the symmetric form
$$
\fh_\infty(u, v)=\lim_{\kappa\to\infty}\fh_\kappa(u, v), \qquad u,v\in\q_\infty
$$
is closed in $\overline{\q_\infty}^{~_{\|\cdot\|}}$ and it defines the self-adjoint operator $\mH_\infty$ there.
Further, it holds that
$
\mH_\infty^\dagger=\slim_{k\to\infty}\mH_\kappa^{-1}.
$
The general framework for a description of families of converging
positive definite forms will be the following theorem from \cite{WeidmannScand84}.

\begin{theorem}\label{t:WCon}
Let $\fs_n$, $\fh_n$, $\fu_n$ and $\fh_\infty$ be closed symmetric forms in $\H$ such that they are all
uniformly\footnote{By this we mean that they have a uniform positive lower bound.} positive definite.
\begin{enumerate}
\item If $\fs_n\geq \fs_{n+1}\geq \fh_\infty$ where
\begin{align*}
\fh_\infty(u,v)&=\lim_{n\to\infty} \fs_n(u,v),\qquad u,v\in\bigcup_{n\in\N}\q(\fs_n)
\end{align*}
then $\fh_\infty$ is closed with $\q(\fh_\infty)=
\overline{\bigcup_{n\in\N}\q(\fs_n)}^{~_{\fh_\infty}}$ and $\mH_\infty^\dagger=\slim_n\mS_n^\dagger$.
\item If $\fu_n\leq \fu_{n+1}\leq \fh_\infty$ where
\begin{align*}
\fh_\infty(u,v)&=\lim_{n\to\infty} \fu_n(u,v),\qquad u,v\in\q(\fh_\infty)
\end{align*}
then $\fh_\infty$ is closed with $\q(\fh_\infty)=
\left\{f\in\bigcap_{n\in\N}\q(\fu_n)~:~\sup \fu_n[f]<\infty\right\}$
 and $\mH_\infty^\dagger=\slim_n\mU_n^\dagger$.
\item If $\fu_n$ and $\fs_n$ are as before and $\fu_n\leq \fh_n\leq \fs_n$
also holds, then
\begin{align*}
\fh_\infty(u,v)&=\lim_{n\to\infty} \fh_n(u,v),\qquad u,v\in\q(\fh_\infty),\\
\mH_\infty^\dagger&=\slim_{\kappa\to\infty}\mH^\dagger_n.
\end{align*}
\end{enumerate}
\end{theorem}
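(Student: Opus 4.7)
The three parts of Theorem~\ref{t:WCon} are tied together by the order reversal (\ref{e_radniuredjaj}): each monotone assumption on the forms translates into the opposite monotone statement about the bounded pseudo inverses, and the uniform positive lower bound $c>0$ keeps these inverses uniformly bounded by $1/c$. The plan is to extract the candidate limit operator as the strong limit of a monotone bounded sequence of bounded self-adjoint operators, and then identify this limit with $\mH_\infty^\dagger$ by checking it against the form-representation definition of the pseudo inverse.

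For part~(1), the hypothesis $\fs_n\ge \fs_{n+1}$ gives $\mS_n^\dagger\le \mS_{n+1}^\dagger$ by (\ref{e_radniuredjaj}), so the monotone convergence theorem for bounded positive self-adjoint operators delivers $T=\slim_n\mS_n^\dagger$ with $\|T\|\le 1/c$. The pointwise form-limit $\fh_\infty(u,v):=\lim_n\fs_n(u,v)$ exists on $\bigcup_n\q(\fs_n)$ by monotonicity, is closable thanks to the uniform positive lower bound, and the closure has the asserted domain $\overline{\bigcup_n\q(\fs_n)}^{~_{\fh_\infty}}$. The substantive work is the identification $T=\mH_\infty^\dagger$. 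I would split $\H=\overline{\q(\fh_\infty)}\oplus\q(\fh_\infty)^\perp$ and first observe that $T$ annihilates the second summand: the order convention (\ref{e_uredjaj}) forces $\q(\fs_n)\subset\q(\fh_\infty)$ for every $n$, hence $\q(\fh_\infty)^\perp\subset\q(\fs_n)^\perp\subset\d(\mS_n)^\perp$, a subspace on which each $\mS_n^\dagger$ vanishes, and this property is preserved in the strong limit. On the first summand I would use polarization and the scalar monotone limit $(\mS_n^\dagger u,u)\nearrow(Tu,u)$ to show that for every $u\in\overline{\q(\fh_\infty)}$ the vector $Tu$ lies in $\q(\fh_\infty)$ and satisfies $\fh_\infty(Tu,v)=(u,v)$ for all $v\in\q(\fh_\infty)$, which is precisely the defining identity of $\mH_\infty^\dagger$.

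Part~(2) is dual: $\fu_n\le \fu_{n+1}$ gives the decreasing and uniformly bounded sequence $\mU_n^\dagger\ge \mU_{n+1}^\dagger\ge 0$, hence a strong limit $T'$; the domain description $\q(\fh_\infty)=\{f\in\bigcap_n\q(\fu_n):\sup_n\fu_n[f]<\infty\}$ is equivalent, by monotonicity of $n\mapsto\fu_n[f]$, to the existence of the limit $\lim_n\fu_n[f]$, after which $T'=\mH_\infty^\dagger$ follows from the same polarization-and-limit identification as in part~(1), with the inequalities reversed. Part~(3) is then immediate: the sandwich $\fu_n\le \fh_n\le \fs_n$ translates via (\ref{e_radniuredjaj}) into $\mS_n^\dagger\le \mH_n^\dagger\le \mU_n^\dagger$, and setting $R_n:=\mH_n^\dagger-\mS_n^\dagger\ge 0$ the elementary estimate $\|R_nu\|^2\le\|R_n\|\,(R_nu,u)\le\|\mU_n^\dagger-\mS_n^\dagger\|\,((\mU_n^\dagger-\mS_n^\dagger)u,u)$ combined with the uniform norm bound $1/c$ reduces strong convergence of $\mH_n^\dagger$ to the strong convergence of $\mU_n^\dagger-\mS_n^\dagger$ to zero, already obtained in parts~(1)–(2). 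The genuinely delicate step in the whole argument is the identification in part~(1): because $\mH_\infty$ need not be densely defined in $\H$, the pseudo inverse has to be parsed through the orthogonal decomposition recalled just before the theorem, and this bookkeeping of the null-space contribution to $\d(\widehat{\mH_\infty})$ is where I expect the main technical obstacle to lie.
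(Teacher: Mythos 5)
You should first note that the paper does not contain its own proof of Theorem~\ref{t:WCon}: it is quoted from \cite{WeidmannScand84}, with the accompanying remark that essentially the same statement appears independently in \cite{SanchezPalencia90,Simon78-canonical}. There is therefore no proof in the paper to compare your argument against; the assessment below is of your sketch on its own terms, in the spirit of those references.

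Your architecture is the standard and correct one: the order reversal (\ref{e_radniuredjaj}) turns form monotonicity into operator monotonicity of the bounded pseudo inverses, the monotone convergence theorem for bounded self-adjoint operators supplies the candidate strong limit $T$, and the sandwich in part~(3) reduces to parts~(1)--(2) via $\|R_nu\|^2\le\|R_n\|\,(R_nu,u)$. The place that remains genuinely under-argued is the step you yourself flag as substantive, namely the identification $T=\mH_\infty^\dagger$. Polarization together with $(\mS_n^\dagger u,u)\nearrow(Tu,u)$ only recovers weak convergence, which you already have from the strong limit, and it says nothing about $Tu$ landing in the form domain. What is actually needed is the a priori bound $\fh_\infty[\mS_n^\dagger u]\le\fs_n[\mS_n^\dagger u]=(u,\mS_n^\dagger u)\le\|u\|^2/c$ together with weak compactness in the Hilbert space $\bigl(\q(\fh_\infty),\fh_\infty(\cdot,\cdot)\bigr)$ to conclude $Tu\in\q(\fh_\infty)$, and then a passage to the limit in $\fs_n(\mS_n^\dagger u,v)=(u,v)$ for $v\in\bigcup_m\q(\fs_m)$: split $\fs_n=\fh_\infty+(\fs_n-\fh_\infty)$, control the cross term by Cauchy--Schwarz in the nonnegative excess form using $(\fs_n-\fh_\infty)[v]\to0$ and the uniform bound on $(\fs_n-\fh_\infty)[\mS_n^\dagger u]$, and finish with a density argument in the form norm. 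There is also a small but genuine slip in your part~(1): a uniform positive lower bound does \emph{not} by itself make a decreasing pointwise limit of closed forms closable --- that is precisely the delicate point of the decreasing case (cf.~\cite{Simon78-canonical}), and it is why the theorem places the closedness of $\fh_\infty$ among the hypotheses and states the domain as a form-norm closure of the union. With that correction and the identification step made precise, parts~(2) and (3) go through essentially as you describe.
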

For the families of forms which satisfy the assumptions of Theorem \ref{t:WCon} the following
qualitative convergence result on spectral families has been established in \cite{WeidmannScand84}.

\begin{theorem}\label{t:WeNorm}
Let $\fh_n$ be a sequence of positive definite forms that satisfies any of the
assumptions of Theorem \ref{t:WCon} for $\fu_n$,  $\fs_n$ or $\fh_n$. Let there also be the
positive definite form $\fs$ such that $\fh_n\geq \fs$ and $\lambda_e(\mS)>0$.
Then
\begin{equation}\label{eq:WeiQual}
\|E_{n}(D)-E_{\infty}(D)\|\to 0, \qquad D<\lambda_e(\mS), D\not\in\spec(\mH_\infty).
\end{equation}
\end{theorem}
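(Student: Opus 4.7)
The plan is to combine the strong (pseudo\mbox{-})resolvent convergence provided by Theorem~\ref{t:WCon} with the rank control furnished by the uniform lower bound $\fh_n \geq \fs$ together with $\lambda_e(\mS) > D$. By the min--max principle applied to the form inequality $\fh_n \geq \fs$, the $k$-th eigenvalue of $\mH_n$ majorizes that of $\mS$; hence, with $N$ denoting the (finite) number of eigenvalues of $\mS$ in $(-\infty, D]$, at most $N$ eigenvalues of $\mH_n$ lie below $D$, so $\dim \ra(E_n(D)) \leq N$ uniformly in $n$, and the same bound holds for $\mH_\infty$.

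Theorem~\ref{t:WCon} furnishes $\mH_n^\dagger \to \mH_\infty^\dagger$ in the strong operator topology, which, by the standard spectral-theoretic translation (bounded continuous functional calculus in SOT together with $D \notin \spec(\mH_\infty)$) yields strong convergence $E_n(D) \to E_\infty(D)$. Moreover, the monotone sandwich structure of Theorem~\ref{t:WCon} (be it case (1), (2), or (3)) delivers, via min--max, convergence $\lambda_k(\mH_n) \to \lambda_k(\mH_\infty)$ of each discrete eigenvalue below $D$ (from above in case (1), from below in case (2), by squeeze in case (3)). Since $D$ is not itself an eigenvalue of $\mH_\infty$, the uniformly bounded count $\dim \ra(E_n(D)) = \#\{k : \lambda_k(\mH_n) \leq D\}$ stabilizes to $\dim \ra(E_\infty(D)) =: k$ once the monotone sequences have crossed the threshold $D$ on their respective sides.

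With ranks stabilized, strong convergence is upgraded to norm convergence through an elementary basis argument. Fix an orthonormal basis $u_1, \ldots, u_k$ of $\ra(E_\infty(D))$. Strong convergence gives $E_n(D) u_j \to u_j$ in $\H$; for $n$ large these vectors are linearly independent and, by equality of ranks, span $\ra(E_n(D))$. Gram--Schmidt produces an orthonormal basis $v_1^n, \ldots, v_k^n$ of $\ra(E_n(D))$ with $v_j^n \to u_j$ in norm, whence $E_n(D) = \sum_{j=1}^k (v_j^n, \cdot)\, v_j^n$ converges in operator norm to $E_\infty(D) = \sum_{j=1}^k (u_j, \cdot)\, u_j$.

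The main obstacle is precisely the rank-stabilization step: strong convergence of projections of bounded finite rank does not by itself imply norm convergence (the rank-one sequence $P_n f = (e_n, f)\, e_n$ tends to zero strongly yet satisfies $\|P_n\| = 1$). What rescues the argument is the specific combination of inputs $\fh_n \geq \fs$, $\lambda_e(\mS) > D$, $D \notin \spec(\mH_\infty)$, and the monotone structure of Theorem~\ref{t:WCon}: min--max delivers the uniform rank bound, eigenvalue monotonicity delivers convergence, and the gap condition at $D$ prevents any eigenvalue of $\mH_n$ from crossing $D$ in the limit and causing a rank jump. Only with these four ingredients in place does the elementary basis argument produce norm convergence.
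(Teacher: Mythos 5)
The paper does not actually supply a proof of Theorem~\ref{t:WeNorm}; it is quoted as established in \cite{WeidmannScand84} and no argument is reproduced, so there is no ``paper's own proof'' to compare yours against. Your argument is a reasonable reconstruction of a Weidmann-type proof, and the load-bearing ideas are right: the uniform lower bound $\fh_n \geq \fs$ plus $D < \lambda_e(\mS)$ gives, via min--max, a uniform finite bound on $\dim\ra(E_n(D))$; strong pseudo-resolvent convergence from Theorem~\ref{t:WCon} together with $D \notin \spec(\mH_\infty)$ gives strong convergence $E_n(D)\to E_\infty(D)$; and with both strong convergence and eventual equality of the (finite) ranks in hand, the Gram--Schmidt passage from strong to operator-norm convergence is a clean and correct finish.

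One point deserves a sharper justification than you give it. You attribute the eigenvalue convergence $\lambda_k(\mH_n)\to\lambda_k(\mH_\infty)$ ``via min--max''. Min--max alone only yields one-sided monotonicity of $\lambda_k(\mH_n)$ in $n$ (decreasing in case (1), increasing in case (2), squeezed in case (3)) and hence existence of a limit $\mu_k$ on the correct side of $\lambda_k(\mH_\infty)$; it does not by itself force $\mu_k=\lambda_k(\mH_\infty)$. To close that gap you must re-invoke the strong convergence $\mH_n^\dagger\to\mH_\infty^\dagger$ that you already have (e.g.\ via the max--min characterization of the top eigenvalues of $\mH_n^\dagger$ on finite-dimensional test subspaces, or, in case (1), by passing to the limit in $\fs_n[u]\geq\lambda_k(\mS_n)\|u\|^2$ and using closedness of $\fh_\infty$), together with the uniform rank bound to keep the relevant eigenvalues discrete and isolated below $D$. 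Once this is spelled out, the rank-stabilization step (no eigenvalue of $\mH_n$ crossing $D$ in the limit, because $D\notin\spec(\mH_\infty)$ and the approach is monotone) is correct, and the argument is complete.
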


The results like Theorem \ref{t:WCon} have independently been obtained in
\cite{SanchezPalencia90,Simon78-canonical}. We have opted for Theorem \ref{t:WCon} since
it extensively uses the monotonicity (or ``sandwiched'' monotonicity) to establish the
stability of the converging eigenvalues and this fits neatly into the perturbation framework of
\cite{DrmHAri97}. This was the chief source of motivation for
the main construction from the PhD thesis \cite{GruPhd} (those results appeared later
in \cite{Gru05_3,Gru03_3,GruVes02,Gru_Ves_Sylv}).

\section{A constructive approach to asymptotic eigenvalue/eigenvector estimates}\label{section3}

Let us reiterate that we use the notion of the constructiveness in this paper in two contexts. First, it should emphasize
that all of our theory is bases on the error representation result like (\ref{eq:apply})--(\ref{eq:apply2}), below.
But second, it is also meant to emphasize that in a result like those of the type
(\ref{eq:1})--(\ref{eq:3}) we present a way to construct an improvement to the approximation $\lambda_i^\infty$ (of
the eigenvalue $\lambda_i^\kappa$).
The constants $\text{lb}$ and $\text{ub}_i$, $i=1,2,3$ are explicit functions of the approximation
defects $\eta_i(P)$, to be defined below and it is the aim of this section to reveal this
dependence.

\subsection{Background information on the block-diagonal part of the operator/form}\label{section31}
In this section we review the results form our previous work which we use to
prove our first contribution in Section \ref{section32}. A reader who would like
to go straight to the new results can do that directly after reading equation
(\ref{eq:block}) and Definition \ref{def:kapprox} below.

In this section we assume that we have a fixed closed symmetric and densely defined form $\fh$.
We will review the basic spectral properties of the \textit{block-diagonal part} of $\fh$
with respect to orthogonal projection $P$, $\ra(P)\subset\q(\fh)$
as is presented in \cite{Gru03_3}. In order to simplify the presentation we temporarily
suppress (in the notation) the dependence of quantities on $\mH$, where there is no danger
of confusion.
Assuming that $\ra(P)$ is finite dimensional we define the \textit{block-diagonal} part of $\fh$
by setting
\begin{equation}\label{eq:block}
\fh_P(u,v):=\fh(Pu, Pv)+\fh(P_\perp u, P_\perp v),\qquad u, v \in\q(\fh_P):=\q(\fh).
\end{equation}
Obviously the form $\fh_P$ is closed and positive definite and so it defines the
 self-adjoint operator $\mH_P$ in the sense of Kato. We further have (for a proof see
 \cite{Gru05_3,Gru03_3}):
\begin{align}\label{eq:cons1}
&\ra(\mH^{-1}-\mH_P^{-1})\qquad  \textrm{is finite dimensional}.\\
&\eta_{\max}(P):=\sup_{u\in\q(\fh)}\frac{|\fh[u]-\fh_P[u]|}{\fh_P[u]}<1.
\label{eq:cons2}
\end{align}
A first consequence of these two features is the stability of essential spectra, namely Weyl's theorem gives
$\spec_{{\rm ess}}(\mH)=\spec_{{\rm ess}}(\mH_P)$. Further, we have the estimate---of the same form as (\ref{eq:cons2})---
for the eigenvalues $\lambda_i(\mH_P)$ and $\lambda_i(\mH)$, $i\in\N$ which are below the infimum
of the essential spectrum $\lambda_{{\rm ess}}(\mH)=\lambda_{{\rm ess}}(\mH_P)$
\begin{equation}\label{eq:rem}
\frac{|\lambda_i(\mH)-\lambda_i(\mH_P)|}{\lambda_i(\mH_P)}<\eta_{\max}(P),\qquad i\in\N.
\end{equation}
The attractiveness of interpreting the form $\fh$ as a perturbation of its block-diagonal part lies in the fact
that
\begin{equation}\label{eq:local}
\spec(\mH_P)=\spec(\Xi)\cup\spec(\mW)
\end{equation}
where $\Xi=(\mH^{1/2}P)^*(\mH^{1/2}P)\big|_{\ra(P)}$ is a finite dimensional
 operator and $\mW$ is the self-adjoint operator which is defined
in $\ra(P_\perp)$ by the quadratic form $\fh(P_\perp\cdot, P_\perp\cdot)$. Since
 $\spec(\Xi)$ is computable, we can start building our constructive estimation procedure on this fact.
 As a convention we will use $\mu_1\leq\cdots\leq\mu_{\dim\ra(P)}$ to denote the eigenvalues of $\Xi$. The numbers $\mu_i$
 will be called the \textit{Ritz values} from the subspace $\ra(P)$. In this section we also use the notation
 $\lambda_i:=\lambda_i(\mH)$.

Let us now assume that $\dim \ra(P)=m\in\N$. To examine the relationship between $\fh$ and $\fh_P$ in further detail define
\begin{equation}\label{eq.sing_val_2}
\eta_i(P):=\Big[\!\!\!\max_{\substack{\mathcal{S}\subset\ra(P),\\
\dim(\mathcal{S})=m-i+1}}\!\!\!\!\min\big\{\frac{(\psi,
\mH^{-1}\psi)-(\psi,\mH^{-1}_P\psi)}{(\psi,\mH^{-1}\psi)}~
\big|~\psi\in\mathcal{S}, \|\psi\|=1\big\}\Big]^{1/2},
\end{equation}
for $i=1, \ldots, m$. It has also been shown in \cite{Gru03_3} that $\eta_{\max}(P)=\eta_m(P)$.
Although the perturbation $\delta_P(\fh):=\fh-\fh_P$
is in general---for some
$P$, $\ra(P)\subset\q(\fh)$---not representable by an operator, the quadratic form
$\delta_P^s(\fh)[\cdot]:=\fh[\mH^{-1/2}_P\cdot]-\fh_P[\mH^{-1/2}_P\cdot]$ can always be represented by
the bounded operator block-matrix (with respect to $P\oplus P_\perp=I$)
$$
\delta_P^s(H)=\begin{pmatrix}
0&\Gamma^*\\\Gamma&0\end{pmatrix},\qquad \textrm{and}\quad(\cdot,\delta_P^s(H)\cdot)=\delta_P^s(\fh)[\cdot].
$$
Furthermore, \cite[Lemma 2.1]{Gru05_3} gives that $s_i(\Gamma)=\eta_i(P)$, $i=1,\ldots,m$.
 The analysis of \cite{Gru05_3} now yields the conclusion that the
test space $\ra(P)$ can be used to generate good approximation for the
eigenvalues $\lambda_i$, $i=q, \ldots, q+m-1$ when $\eta_m(P)$ is smaller than half of
the \textit{relative gap} $$\gamma_q:=\min\Big\{
\frac{\lambda_{q+m}-\mu_m}{\lambda_{q+m}+\mu_m},
\frac{\mu_1-\lambda_{q-1}}{\mu_1+\lambda_{q-1}}\Big\}.$$
 Ample numerical evidence
corroborate that such estimates are robust (with regard to scaling) and sharp. Assume
that $\eta_{\max}(P)<\frac{1}{2}\gamma_q$ and that $\dim\ra(P)=m$, where $m$ is
the multiplicity of the eigenvalue $\lambda_q$. Using \cite[Theorem 3.3]{Gru05_3} we conclude that the operator matrix
\begin{equation}\label{eq:Schur}
\delta_P^s(H_q)=\left[\begin{matrix}\I-\lambda_q\Xi^{-1}&
\Gamma^*\\ \Gamma&\I- \lambda_q\mW^{-1} \end{matrix}\right],
\end{equation}
which is the block-matrix representation (with respect to $P\oplus P_\perp=\I$) of the quadratic form
$$
\delta_P^s(\fh_q)[\cdot]:=\fh(\mH^{-1/2}_{P}\cdot,\mH^{-1/2}_{P}\cdot)-\lambda_q(\mH^{-1/2}_{P}\cdot,
\mH^{-1/2}_{P}\cdot),
$$
satisfies $\dim\je(\delta_P^s(H_q))=m$ and the mechanism of \cite[(1.1)--(1.2)]{sjoestrand-2003}---also known in the
Linear Algebra as the
Wilkinson's Schur complement trick (see \cite[pp. 183]{Parlett80} and \cite[Theorem 3.3]{Gru05_3})---allows us to conclude
\begin{align}\label{eq:apply}
\I-\lambda_q\Xi^{-1}&=\Gamma^*(\I-\lambda_q\mW^{-1})^{-1}\Gamma\\
&=\Gamma^*\Gamma+\lambda_q\Gamma^*\mW^{-1/2}(\I-\lambda_q\mW^{-1})^{-1}\mW^{-1/2}\Gamma.
\label{eq:apply2}
\end{align}
Identity (\ref{eq:apply}) is the basis of the proof of \cite[Theorem 3.3]{Gru05_3} which we now quote. Note
that (\ref{eq:apply})--(\ref{eq:apply2}) also hold for $\lambda_q$ which is in a gap of the essential
spectrum.
Based on the definition (\ref{eq.sing_val_2}) we now define (for later usage) the \textit{appro\-xi\-mation-defects}
for $\fh_\kappa$.
\begin{definition}\label{def:kapprox}
 Let the sequence $\fh_\kappa$ be given and let
the orthogonal projection $P$ be such that $\ra(P)\subset\q(\fh_\kappa)$ and $\dim\ra(P)<\infty$.
We write $\eta_i(\kappa,P)$ for $\eta_i(P)$ from (\ref{eq.sing_val_2}) when applied on $\fh_\kappa$.
We call $\eta_i(\kappa,P)$ the \textit{$\kappa$-approximation defects}. If we are given a subspace $\mathfrak{P}=\ra(P)$,
then we abuse (simplify) the notation
and freely write $\eta_i(\kappa,\mathfrak{P})=\eta_i(\kappa,P)$.
\end{definition}
\begin{theorem}\label{thm:second}
  Let the discrete eigenvalues of the positive definite operator $\mH$
  be so ordered that $ \lambda_{q-1}<\lambda_q=\lambda_{q+m-1}<
  \lambda_{q+m} $.  Let $\ra(P)\subset\q(h)$ be the test subspace such that
  $\dim\ra(P)=m$ and $ \frac{\eta_m(P)}{1-\eta_m(P)}<\gamma_q$.  Then we have
\begin{align}\label{trece:e_tap4b} \tripleb{\rm
  diag}(\frac{|\lambda_q-\mu_i|}{\mu_i})_{i=1}^m\tripleb&\leq
  \frac{\eta_m(P)}{\fG_{q,\eta_m(P)}}\tripleb{\rm diag}(\eta_i(P))_{i=1}^m\tripleb.
\end{align}
where $\fG_{q,\zeta}:=\max\big\{
  \frac
  {\mu_1(1-\zeta)-(1+\frac{\zeta}{1-\zeta})\lambda_{q-1}}{(1+\frac{\zeta}{1-\zeta})\lambda_{q-1}},
  \frac{(1-\frac{\zeta}{1-\zeta})\lambda_{q+m}-(1+\zeta)\mu_m}{
  (1-\frac{\zeta}{1-\zeta})\lambda_{q+m}}\big\}$
  for $q>1$ and we set
  $\fG_{1,\zeta}:=\fG_{1}:=\frac{\lambda_{m+1}-\mu_m}{\lambda_{m+1}+\mu_m}$.
  Here we use $\diag(\alpha_i)_{i=1}^m$ to denote the $m\times m$ diagonal
  matrix with scalars $\alpha_i$ on its diagonal and $\tripleb\cdot\tripleb$
  denotes any unitary invariant matrix norm and $\mu_i$ are the Ritz values from $\ra(P)$.
\end{theorem}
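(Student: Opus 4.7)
The plan is to read off the theorem from the Schur complement identity (\ref{eq:apply}),
\begin{equation*}
\I-\lambda_q\Xi^{-1} = \Gamma^*\bigl(\I-\lambda_q\mW^{-1}\bigr)^{-1}\Gamma,
\end{equation*}
by applying a unitary invariant norm to both sides. The left-hand side is a finite-dimensional self-adjoint operator on $\ra(P)$ with eigenvalues $1-\lambda_q/\mu_i = (\mu_i-\lambda_q)/\mu_i$, so its singular values are $|\lambda_q-\mu_i|/\mu_i$ and the left-hand side of the asserted inequality appears verbatim as $\tripleb \I-\lambda_q\Xi^{-1}\tripleb$.

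For the right-hand side I would apply the symmetric-norm inequalities $\tripleb AB\tripleb\leq\|A\|\,\tripleb B\tripleb$ and $\tripleb AB\tripleb\leq\tripleb A\tripleb\,\|B\|$ in sequence, setting $Y:=(\I-\lambda_q\mW^{-1})^{-1}$, to obtain
\begin{equation*}
\tripleb \Gamma^* Y\Gamma\tripleb \;\leq\; \|\Gamma^*\|\,\tripleb Y\Gamma\tripleb \;\leq\; \|\Gamma\|\,\|Y\|\,\tripleb \Gamma\tripleb.
\end{equation*}
Since the multiset of singular values of $\Gamma$ equals $\{\eta_i(P)\}_{i=1}^m$ by \cite[Lemma 2.1]{Gru05_3}, we have $\|\Gamma\|=\eta_m(P)$ (which is $\eta_{\max}(P)$) and $\tripleb \Gamma\tripleb=\tripleb\diag(\eta_i(P))_{i=1}^m\tripleb$. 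Combined with the previous paragraph this already produces an inequality of the claimed form, with the unspecified constant $\eta_m(P)\,\|Y\|$.

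The heart of the argument is therefore the operator norm bound $\|Y\|\leq 1/\fG_{q,\eta_m(P)}$. For this one has to localize $\spec(\mW)$ relative to $\lambda_q$. Because $\spec(\mH_P)=\spec(\Xi)\cup\spec(\mW)$ and $\dim\ra(P)=m$ equals the multiplicity of $\lambda_q$, the standing hypothesis $\eta_m(P)/(1-\eta_m(P))<\gamma_q$ forces the Ritz values $\mu_1,\ldots,\mu_m$ to occupy exactly positions $q,\ldots,q+m-1$ in the ordered spectrum of $\mH_P$; hence every $w\in\spec(\mW)$ satisfies either $w\leq\lambda_{q-1}(\mH_P)$ or $w\geq\lambda_{q+m}(\mH_P)$. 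Applying (\ref{eq:rem}) both to these outer eigenvalues of $\mH_P$ and to the Ritz values themselves (yielding the two-sided control $\lambda_q/(1+\eta_m(P))<\mu_j<\lambda_q/(1-\eta_m(P))$ for $j=1,\ldots,m$), one converts a lower bound on $\min_{w\in\spec(\mW)}|1-\lambda_q/w|=1/\|Y\|$ into exactly the algebraic expression defining $\fG_{q,\eta_m(P)}$; the boundary case $q=1$, where there is no lower gap, is covered by the separate definition of $\fG_{1}$.

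The main obstacle is this last localization step. It requires propagating (\ref{eq:rem}) simultaneously to the Ritz values $\mu_j$ and to the adjacent eigenvalues of $\mW$, and keeping careful track of one-sided versus two-sided inequalities so that the resulting lower bound for $\min_{w\in\spec(\mW)}|w-\lambda_q|/|w|$ matches the precise algebraic form of $\fG_{q,\zeta}$. Everything else---factoring through symmetric norms, reading off the eigenvalues of $\I-\lambda_q\Xi^{-1}$, and invoking the identity (\ref{eq:apply})---is essentially automatic, and the result follows by combining the symmetric-norm estimate with this gap bound.
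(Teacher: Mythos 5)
The paper itself does not prove Theorem~\ref{thm:second}; it quotes it from \cite[Theorem 3.3]{Gru05_3} and only remarks that the proof rests on the Schur complement identity (\ref{eq:apply}). Your sketch starts from precisely that identity, so it is aligned with the approach the paper signals. The cleanly carried-out parts are correct: the left side $\I-\lambda_q\Xi^{-1}$ has singular values $|\lambda_q-\mu_i|/\mu_i$, the chain $\tripleb\Gamma^*Y\Gamma\tripleb\le\|\Gamma\|\,\|Y\|\,\tripleb\Gamma\tripleb$ is a valid symmetric-norm property, and $\|\Gamma\|=\eta_m(P)$, $\tripleb\Gamma\tripleb=\tripleb\diag(\eta_i(P))_{i=1}^m\tripleb$ follow from \cite[Lemma 2.1]{Gru05_3}. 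Everything thus reduces to $\|Y\|=\bigl(\min_{w\in\spec(\mW)}|w-\lambda_q|/w\bigr)^{-1}\le 1/\fG_{q,\eta_m(P)}$.

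Two points deserve a sharper treatment. First, the claim that the hypothesis forces $\mu_1,\ldots,\mu_m$ to occupy exactly positions $q,\ldots,q+m-1$ in the ordered spectrum of $\mH_P$ is asserted but not proved; the block-diagonal decomposition $\spec(\mH_P)=\{\mu_i\}\cup\spec(\mW)$ does not by itself determine where the Ritz block sits in the ordering, and one needs a counting argument based on the sandwich $(1-\eta_m(P))\mH_P\le\mH\le(1+\eta_m(P))\mH_P$ together with the gap hypothesis $\eta_m(P)/(1-\eta_m(P))<\gamma_q$. You flag this as the main obstacle but do not close it. Second, your localization — correctly carried through — bounds $\min_{w\in\spec(\mW)}|w-\lambda_q|/w$ below by the \emph{minimum} of the two gap expressions (one for the part of $\spec(\mW)$ below, one for the part above), whereas $\fG_{q,\zeta}$ in the statement is written with a $\max$. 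This appears to be a typographical slip in the quoted statement, corroborated by Remark~\ref{rem:rel_gap}, which computes the $\eta\to 0$ limit of $\fG_{q,\zeta}$ as a $\min$ of two relative gaps; with that correction your argument is consistent, and one can check that each of the two terms does lower-bound the respective one-sided relative gap using (\ref{eq:rem}) together with the Ritz-value control $\lambda_q/(1+\eta_m(P))<\mu_j<\lambda_q/(1-\eta_m(P))$.
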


In the case in which we do not have explicit information on the
multiplicity of $\lambda_q$ we have a weaker upper estimate. There
is also an accompanying lower estimate which establishes the
equivalence of the estimators $\eta_i(P)$ and the error. Assuming that $\mH=\int\lambda~dE(\lambda)$ and
that we use
$v_i$, $\mH v_i=\lambda_i  v_i$, $\|v_i\|=1$ to denote eigenvectors and $\psi_i\in\ra(P)$, $\Xi \psi_i=\mu_i \psi_i$,
$\|\psi_i\|=1$ to denote Ritz vectors, we collect some representative spectral estimates (bases on $\ra(P)$)
  from \cite{Gru05_3,Gru_Ves_Sylv}.
\begin{theorem}\label{tm:ess}
  Let the discrete eigenvalues of the positive definite operator $\mH$
  be so ordered that $\lambda_{m}<\lambda_{m+1}$ and let
  $\lambda_{s_1}<\lambda_{s_2}<\cdots<\lambda_{s_p}$ be all the
  elements\footnote{We assume that $1\leq s_1<s_2<\cdots< s_p\leq m$.}
  of
  $\spec(\mH)\setminus\{\lambda\in\spec(\mH)~:~\lambda\geq\lambda_{m+1}\}$.
  If
  $\frac{\eta_m(P)}{1-\eta_m(P)}<\frac{\lambda_{m+1}-\mu_m}{\lambda_{m}+\mu_m}$
  then there exist eigenvectors $v_i$, $\mH v_i=\lambda_i  v_i$, $\|v_i\|=1$
  and Ritz vectors $\psi_i\in\ra(P)$, $\Xi \psi_i=\mu_i \psi_i$,
$\|\psi_i\|=1$ such that
\begin{align}\label{eq:Sylv}
\tripleb E(\mu_m)-P\tripleb
&\leq\frac{\sqrt{\lambda_{m+1}\mu_m}}{\lambda_{m+1}-\mu_m}~
\frac{\tripleb\diag((\eta_i(P))_{i=1}^m)\oplus \diag((\eta_i(P))_{i=1}^m)\tripleb}{\sqrt{1-\eta_m(P)}},\\
\label{trece:e_tap6}
  \frac{\mu_1}{2\mu_m}\sum_{i=1}^m\eta_i^2(P)&\leq\sum^m_{i=1}\frac{|\lambda_i-\mu_i|}{\mu_i}\leq
    \frac{1}{{\displaystyle
    \min_{i=1,\ldots,p}\fG_{s_i,\eta_{m_i}(P_{s_i})}}} \sum_{i=1}^m\eta_i^2(P),\\
\label {eq:gruves}
      \|v_i-\psi_i\|&\leq\max_{\lambda\in\spec(\mH)\setminus\{\lambda_i\}}\frac{\sqrt{2\lambda\mu_i}}
  {|\lambda-\mu_i|}\frac{\eta_m(P)}{\sqrt{1-\eta_m(P)}}, \\
  \frac{\fh[\psi_i-v_i]}{\fh[ v_i]}&=
  \|v_i-\psi_i\|^2+\frac{\mu_i-\lambda_i}{\lambda_i},\qquad
  i=1,...,m.\label{eq:strang}
\end{align}
Here $P_{s_i}$ is the orthogonal projection onto the linear span of
$\{\psi_{j}~:~j=\sum^i_{k=1}m_k+1,
\ldots,\sum^{i+1}_{k=1}m_k \}$ and $m_i$ is the multiplicity of the
eigenvalue $\lambda_{s_i}$, $i=1, \ldots,p$. Obviously the
identity $P_{s_1}\oplus P_{s_2}\oplus\cdots\oplus P_{s_p}=P$
holds. In the case in which $\lambda_1=\lambda_m$ we can drop the
constant $\frac{\mu_1}{2\mu_m}$ from the lower estimate. We can also allow for other cross norms
$\tripleb\cdot\tripleb$ of the
diagonal matrix $\diag((\eta_i(P))_{i=1}^m)$ in (\ref{trece:e_tap6}).
\end{theorem}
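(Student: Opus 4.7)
The plan is to leverage the block-matrix representation (\ref{eq:Schur}) together with the Schur-complement identity (\ref{eq:apply})--(\ref{eq:apply2}), which was already the engine of Theorem \ref{thm:second}, and then specialize it to the three different quantities (spectral projection, trace-type error, and individual eigenvector error) appearing in the statement. The singular values of the off-diagonal block $\Gamma$ are exactly $\eta_i(P)$, so every bound will in the end be read off from $\Gamma$ after pre/post-composing with bounded resolvent-type operators whose norms are controlled by the relative gap.

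For (\ref{eq:Sylv}), I would pass from $\delta_P^s(H_q)$ to a suitable Sylvester equation in the spirit of Davis--Kahan: the off-diagonal block of the unitary matching the spectral subspace $E(\mu_m)\H$ with $\ra(P)$ satisfies $\Xi X - X \mW = \Gamma^*$ (up to a scaling factor coming from $\mH^{1/2}_P$), and its norm is bounded by separating the spectra $\spec(\Xi) \subset (0,\mu_m]$ and $\spec(\mW) \supset \{\lambda_{m+1},\ldots\}$. The factor $\sqrt{\lambda_{m+1}\mu_m}/(\lambda_{m+1}-\mu_m)$ encodes the relative (not absolute) gap, which naturally appears because our perturbation is written in the $\mH_P^{1/2}$-scaled picture; the direct summand $\diag(\eta_i(P))\oplus\diag(\eta_i(P))$ and the $(1-\eta_m(P))^{-1/2}$ factor both arise when converting back from the $\mH_P^{-1/2}$-scaled quantities to the original Hilbert space norm, using (\ref{eq:cons2}).

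For the two-sided estimate (\ref{trece:e_tap6}), I would take the trace of the diagonal block $\I - \lambda_q \Xi^{-1}$ in (\ref{eq:apply}): the first term $\Gamma^*\Gamma$ contributes exactly $\sum_i \eta_i^2(P)$, giving the lower bound after bounding $\lambda_i/\mu_i$ from below by $\mu_1/(2\mu_m)$ (this factor disappears when $\lambda_1=\lambda_m$ since then $\mu_1$ and $\lambda_1$ are close). The upper bound is obtained by bounding the second term in (\ref{eq:apply2}) through $\|(\I-\lambda_q \mW^{-1})^{-1}\|$, which is controlled by $\fG_{s_i,\eta_{m_i}(P_{s_i})}$; summing the contributions coming from each cluster $\lambda_{s_i}$ via the block decomposition $P = P_{s_1}\oplus\cdots\oplus P_{s_p}$ and invoking Theorem \ref{thm:second} on each cluster gives the stated constant. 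The fact that any unitary invariant norm can replace the trace follows because (\ref{eq:apply}) is a genuine operator identity, so every singular value inequality is automatic.

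For (\ref{eq:gruves}) I would project the Ritz vector $\psi_i$ onto the spectral subspaces of $\mH$ and use the resolvent-type identity $(\mu_i - \mH)^{-1}$ outside $\{\lambda_i\}$, with norm bounded by $\max_{\lambda \neq \lambda_i}\sqrt{2\lambda\mu_i}/|\lambda-\mu_i|$; the $\eta_m(P)/\sqrt{1-\eta_m(P)}$ factor is again the $\mH_P^{1/2}\to\mH^{1/2}$ scaling correction applied to $\|\Gamma \psi_i\|\leq \eta_m(P)$. Finally, (\ref{eq:strang}) is an algebraic identity: expand $\fh[\psi_i - v_i] = \fh[\psi_i] - 2\Re \fh(\psi_i, v_i) + \fh[v_i]$, use $\mH v_i = \lambda_i v_i$ to replace $\fh(\psi_i,v_i) = \lambda_i(\psi_i, v_i)$, use $\fh[\psi_i] = \mu_i$ (since $\psi_i$ is a unit Ritz vector with Ritz value $\mu_i$), divide by $\fh[v_i]=\lambda_i$, and rearrange. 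The hard part of the whole proof is, as in Theorem \ref{thm:second}, tracking the relative-gap bookkeeping across several clusters $\lambda_{s_1},\ldots,\lambda_{s_p}$ so that the sub-projections $P_{s_i}$ each satisfy the gap hypothesis needed to invoke (\ref{eq:apply})--(\ref{eq:apply2}); once that is done, every bound is a matter of choosing the right unitary-invariant norm on $\Gamma$.
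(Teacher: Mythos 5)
The paper does not actually prove Theorem~\ref{tm:ess}: immediately after the statement it says ``The proof of the estimate for the spectral projection (\ref{eq:Sylv}) can be found in \cite{Gru_Ves_Sylv}, the proof of (\ref{eq:gruves}) is in \cite{Gru05_3} and identity (\ref{eq:strang}) is well-known,'' so the theorem is a survey of results whose proofs live in the cited references. Your sketch uses the right machinery --- the block-form (\ref{eq:Schur}), the fact $s_i(\Gamma)=\eta_i(P)$, the Schur identity (\ref{eq:apply})--(\ref{eq:apply2}), and the weakly formulated Sylvester equation for the $\sin\Theta$ estimate (\ref{eq:Sylv}) --- and this is indeed the framework of \cite{Gru05_3,Gru_Ves_Sylv}. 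Your derivation of (\ref{eq:strang}) is complete and correct.

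There is, however, a genuine gap in your treatment of the two-sided estimate (\ref{trece:e_tap6}), and it is exactly at the point you acknowledge as ``the hard part.'' The Schur-complement identity (\ref{eq:apply}) is derived under the hypothesis of Theorem~\ref{thm:second}, namely that the single eigenvalue $\lambda_q$ has multiplicity $m=\dim\ra(P)$, so that $\dim\je(\delta_P^s(H_q))=m$ and $\I-\lambda_q\mW^{-1}$ is boundedly invertible on all of $\ra(P_\perp)$. In Theorem~\ref{tm:ess} there are $p>1$ distinct eigenvalues $\lambda_{s_1}<\cdots<\lambda_{s_p}$ below $\lambda_{m+1}$, so there is no single $\lambda_q$ for which (\ref{eq:apply}) holds with the full $\Xi$ and $\Gamma$. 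Consequently, taking the trace of (\ref{eq:apply}) does not produce $\sum_{i=1}^m\frac{|\lambda_i-\mu_i|}{\mu_i}$ on the left-hand side (it produces $\sum_i(1-\lambda_q/\mu_i)$, which only matches when all the $\lambda_i$ coincide), nor does ``the first term $\Gamma^*\Gamma$'' automatically contribute $\sum_i\eta_i^2(P)$ once one splits $P=P_{s_1}\oplus\cdots\oplus P_{s_p}$: the off-diagonal block $\Gamma_{s_j}$ of $\delta^s_{P_{s_j}}(H_{s_j})$ maps $\ra(P_{s_j})$ into the full complement $\ra(P_{s_j})^\perp=\ra(P_\perp)\oplus\bigoplus_{k\neq j}\ra(P_{s_k})$, so it acquires an internal component absent from the original $\Gamma$, and $\sum_j\tr(\Gamma_{s_j}^*\Gamma_{s_j})$ is not $\tr(\Gamma^*\Gamma)$. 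This is precisely why the cluster constants $\fG_{s_i,\eta_{m_i}(P_{s_i})}$ and not a single $\fG_{q,\eta_m(P)}$ appear, and it is also why the lower constant $\frac{\mu_1}{2\mu_m}$ enters only in the multi-cluster case (the paper explicitly remarks that it may be dropped when $\lambda_1=\lambda_m$). Your suggested derivation of that constant by ``bounding $\lambda_i/\mu_i$ from below'' is not consistent with this observation, since such a bound would be equally needed when $p=1$. Filling the gap requires a comparison between $\eta_k(P_{s_j})$ and $\eta_k(P)$ --- essentially a splitting argument relating the cluster-wise approximation defects to the global ones through the normalization $(\psi,\mH^{-1}\psi)$, whence the ratio $\mu_1/\mu_m$ --- and this is the content of the arguments in \cite{Gru05_3} that the paper is citing rather than reproducing.
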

The proof of the estimate for the spectral projection (\ref{eq:Sylv}) can be found in
\cite{Gru_Ves_Sylv}, the proof of (\ref{eq:gruves}) is in \cite{Gru05_3} and
identity (\ref{eq:strang}) is well-known. For reader's convenience let us also point out that the problem of
estimating the spectral projections $E(\fI)$---where $\fI$ is some contiguous interval
whose boundary points are not the accumulation points of $\spec(\mH)$---
can be seen
as problem in obtaining a robust computable estimate of the Cauchy integral
\begin{equation}\label{eq:resE}
\|E(\fI)-P\|=\frac{1}{2\pi}\|\oint_{\fC(\fI)}(\zeta-\mH_P)^{-1}-(\zeta-\mH)^{-1}~d\zeta\|.
\end{equation}
By $\fC(\fI)$ we denote the circle in the resolvent set of $\mH$ such that $\fI$ is in the interior
of the associated disc and the rest of the spectrum is outside the disc.
However, contrary to the intuition, the direct analysis of (\ref{eq:resE}) is not
the most natural way to obtain computable
and robust estimates of $\|E(\fI)-P\|$. A problem is that, although the integral of the resolvent
difference does not depend on the integration path $\fC(\fI)$, estimates of it do. Furthermore, the
circle is only one of many possible curves which should be taken into account.
As an alternative we consider the approach of the
(weakly formulated) operator equations. Not only are the estimation formula which are so obtained
sharp (see \cite[Remark 2.3]{Gru_Ves_Sylv}), but also the technique allows for a natural consideration of
estimates which utilize other operator cross norms $\tripleb\cdot\tripleb$.
Such results are known as $\sin\Theta$ theorems in the recognition
of the milestone work \cite{DavisKahan70} and have been extensively studied in the
computational Linear Algebra, see \cite{Li-II-99,Ren-CangStructured} and the references there.
We use a recent generalization of those results,
which is particularly suitable
for an application in the quadratic form setting, see \cite{Gru_Ves_Sylv}.

\begin{remark}\label{rem:rel_gap}
  Note that as $\eta_{s_i}(P_i)\to 0$ we have
  $\fG_{s_i,\eta_{m_i}(P_i)}\!\!\to\min
\{\!\!\frac{\lambda_{s_{i+1}}-\lambda_{s_{i}}}{\lambda_{s_{i}}},
\frac{\lambda_{s_{i}}-\lambda_{s_{i-1}}}{\lambda_{s_{i-1}}}\!\}$ and ${
\min\{\fG_{s_i,\eta_{m_i}(P_i)}}~:~i=1,\ldots,p\}$ quantifies the minimal
\textit{relative} gap among the eigenvalues
$\lambda_{s_1}<\lambda_{s_2}<\cdots<\lambda_{s_p}$. Note that the
relative gap $\fG_{s_i,\eta_{s_i}(P_i)}$ distinguishes better between
the close eigenvalues than the \textit{absolute} gap, eg.
$\min\{\lambda_{s_{i+1}}-\lambda_{s_{i}},
\lambda_{s_{i}}-\lambda_{s_{i-1}}\}$ is an example of an absolute gap.
In Theorem \ref{tm:ess}, equivalently as in \cite[Proposition
2.3]{DrmacVeselic2}, we have that when
$\eta_{m_i}(P_i)<\frac{1}{3}\min_{k\ne
j}\frac{|\lambda_{s_k}-\lambda_{s_j}|}{\lambda_{s_k}+\lambda_{s_j}}$,
$i=1, \ldots,p$ then
  $$\frac{1}{{\displaystyle
\min_{i=1,\ldots,p}\fG_{s_i,\eta_{m_i}(P_i)}}}\leq
\frac{3}{{ \min_{k\ne
j}\frac{|\lambda_{s_k}-\lambda_{s_j}|}{\lambda_{s_k}+\lambda_{s_j}}}}.
$$
\end{remark}

\subsection{Estimates for the spectral asymptotic}\label{section32} We will now use
Theorem \ref{tm:ess}
to obtain convergence rate estimates for (\ref{eq:WeiQual}). This
is the central result which guaranties the stability of the spectrum of the
converging family of forms $\fh_\kappa$. Subsequently we will also prove results
like (\ref{eq:1})--(\ref{eq:3}) and use the motivating example of the
Schroedinger operator with a singular obstacle potential from \cite[Section 4]{Gru05_3} to
show our estimates in action.

Although we are working under the assumptions of Theorem \ref{t:WCon}, we assume---in order to be more explicit---
that we have the non-inhibited stiff family $\fh_\kappa$ from (\ref{eq:def}). The form $\fh_\infty$ obviously
defines the self-adjoint operator $\mH_\infty$ in $\je(\fh_e)$. By $\mH_\infty=\int\lambda E_\infty(\lambda)$
we denote the spectral representation of $\mH_\infty$ in $\je(\fh_e)$. We identify $E_\infty(\cdot)$ with
$E_\infty(\cdot)P_{\je(\fh_e)}$ and write $\mH_\infty=\int\lambda E_\infty(\lambda)$ for the non-densely
defined---in the space $\H$---operator $\mH_\infty$. Let $\fI$ be a contiguous interval
in $\R$, then $\fE_\infty^\fI:=\ra (E_\infty(\fI))$ is a subspace of $\q:=\q(\fh_b)$.
Let now $\fI$ be such that
$\fE_\infty^\fI$ is finite dimensional, then $\kappa$-approximation defect is given by
\begin{equation}\label{eq:AppDef}
\eta_i(\kappa,\fE_\infty^\fI):=\Big[\!\!\!\!\!\!\!\!\max_{\substack{\mathcal{S}\subset\fE_\infty,\\
\dim(\mathcal{S})=m-i+1}}\!\!\!\!\!\!\!\!\!\!\!\min\big\{\!\frac{(\psi,
\mH^{-1}_\kappa\psi)-(\psi,\mH^{-1}_{\fE_\infty}\psi)}{(\psi,\mH^{-1}_\kappa\psi)}~
\big|~\psi\in\mathcal{S}, \|\psi\|=1\big\}\Big]^{1/2},
\end{equation}
where $\mH_{\fE_\infty^\fI}^{-1}:=(\mH^{\dagger}_\infty)_{E_\infty(\fI)}=(\mH^{-1}_\kappa)_{E_\infty(\fI)}$ and $i=1, \ldots, \dim\ra(\fE_\infty^\fI)$.
To further simplify the notation we set $\eta_i(\kappa,\fI):=\eta_i(\kappa,\fE_\infty^\fI)$. Theorem \ref{t:WCon} now obviously yields
$$
\lim_{\kappa\to\infty}\eta_i(\kappa,\fI)=0,\qquad i=1,\ldots,\dim\ra(\fE_\infty^\fI).
$$
Similar construction can be performed in the case in which $\fE_\infty^\fI$ is infinite dimensional.
The main features which
are lost in this generalization are the easy computability of
 $\spec(\Xi^{-1}_\kappa)=\spec(\mH^{-1}_{\fE_\infty^\fI}E_\infty(\fI))$, the property that always
 $\eta_{\max}(\kappa,\fI)<1$
and the result on the stability of the essential spectrum. This makes, in general, such method less attractive for
practical constructive considerations.

Let us first give a quantitative version of Theorem \ref{t:WCon} which is based on the
application of Theorem \ref{tm:ess}. As a notational convenience we use
$\lambda_1^\infty\leq\cdots\leq\lambda_{i}^\infty\leq\lambda_{{\rm ess}}^\infty$ and
$\lambda_1^\kappa\leq\cdots\leq\lambda_{i}^\kappa\leq\lambda_{{\rm ess}}^\kappa$ to denote
the discrete eigenvalues below the infimum of the essential spectrum of the operators $\mH_\infty$ and
$\mH_\kappa$ respectively.

\begin{theorem}\label{t:strong}
Let $\mH_\kappa=
\int\lambda~\text{d}~E_\kappa(\lambda)$ be the operators which are associated with the
family of forms $\fh_\kappa$.
Take $D\in\R$ such that $\lambda_m^\infty<D<\lambda_{m+1}^\infty$ and set $\fI=\left<-\infty,D\right]$,
then
\begin{align}
\label{e:co0}
\eta_i(\kappa,\fI)&<1,\qquad i=1,\ldots,m,\\
\label{e:co1}
\frac{|\lambda^\kappa_{j}-\lambda_j^\infty|}{\lambda_j^\infty}&\leq\eta_m(\kappa,\fI),\quad j=1,\ldots,m,\\
\label{e:co3}\|E_\kappa(D)-E_\infty(D)\|&\leq
\frac{\sqrt{D\lambda_m^\infty}}{|D-\lambda_m^\infty|}\frac{\eta_m(\kappa,\fI)}{\sqrt{1-\eta_m(\kappa,\fI)}}
\end{align}
for $\kappa$ large enough. (For the meaning of the phrase large enough see Remark \ref{rem:below}.)
\end{theorem}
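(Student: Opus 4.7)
The plan is to apply the block-diagonal perturbation apparatus of Section \ref{section31} with operator $\mH=\mH_\kappa$ and test projection $P=E_\infty(\fI)$ (so $\ra(P)=\fE_\infty^\fI$ has dimension $m$). The first step is the identification of Ritz values: since $\fE_\infty^\fI\subset\je(\fh_e)\cap\q(\fh_\kappa)$, the stiff term vanishes on $\ra(P)$ and $\fh_\kappa(u,v)=\fh_b(u,v)=\fh_\infty(u,v)$ for all $u,v\in\ra(P)$. Because $\fE_\infty^\fI$ is invariant under $\mH_\infty$, the Ritz values $\mu_1\leq\cdots\leq\mu_m$ of $\mH_\kappa$ from $\ra(P)$ coincide with $\lambda_1^\infty,\ldots,\lambda_m^\infty$, and definition (\ref{eq:AppDef}) reduces to the abstract (\ref{eq.sing_val_2}) applied to $\fh_\kappa$ with this $P$.

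With this identification in place, (\ref{e:co0}) is simply a restatement of (\ref{eq:cons2}): one always has $\eta_i(\kappa,\fI)\leq\eta_m(\kappa,\fI)<1$. Moreover Theorem \ref{t:WCon} forces $\eta_m(\kappa,\fI)\to 0$ as $\kappa\to\infty$, which is precisely what ``$\kappa$ large enough'' will quantify below.

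For the eigenvalue bound (\ref{e:co1}) I would invoke (\ref{eq:rem}) (or equivalently the matrix version in Theorem \ref{thm:second}) applied to $\mH_\kappa$ with $P=E_\infty(\fI)$. To use it I must verify that the lowest $m$ eigenvalues of the block-diagonalized operator $(\mH_\kappa)_P$ are precisely $\mu_1,\ldots,\mu_m$. Since $\spec((\mH_\kappa)_P)=\{\mu_j\}_{j=1}^m\cup\spec(\mW_\kappa)$, the remaining task is to show $\spec(\mW_\kappa)\subset(D,\infty)$ for $\kappa$ large. This is the only non-routine step: I would split $\ra(P_\perp)\cap\q(\fh_\kappa)$ into the piece inside $\je(\fh_e)$, where $\fh_\kappa=\fh_\infty$ is bounded below by $\lambda_{m+1}^\infty>D$, and its complement, where $\kappa^2\fh_e[\cdot]$ forces the form (and hence the spectrum of the compression $\mW_\kappa$) to grow without bound. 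Theorem \ref{t:WeNorm} corroborates this pointwise picture with norm-convergence of the low spectral projections. Once this is established, (\ref{eq:rem}) immediately yields (\ref{e:co1}).

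The final step is (\ref{e:co3}): I would apply the $\sin\Theta$-type inequality (\ref{eq:Sylv}) of Theorem \ref{tm:ess} in the operator norm. For $\kappa$ large the previous step gives $E_\kappa(D)=E_\kappa(\lambda_m^\kappa)$ and $E_\infty(D)=P$, so the left-hand side of (\ref{eq:Sylv}) is exactly $\|E_\kappa(D)-E_\infty(D)\|$. The operator norm of $\diag((\eta_i(\kappa,\fI))_{i=1}^m)\oplus\diag((\eta_i(\kappa,\fI))_{i=1}^m)$ equals $\eta_m(\kappa,\fI)$, giving a bound with coefficient $\sqrt{\lambda_{m+1}^\kappa\lambda_m^\infty}/(\lambda_{m+1}^\kappa-\lambda_m^\infty)$. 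To replace $\lambda_{m+1}^\kappa$ by $D$ I use that $x\mapsto\sqrt{xa}/(x-a)$ is strictly decreasing for $x>a$ together with $\lambda_{m+1}^\kappa>D>\lambda_m^\infty$, already available from the previous step. The main obstacle in the whole argument is precisely this gap-control step: quantifying how large $\kappa$ must be so that the Ritz values lie genuinely at the bottom of $\spec((\mH_\kappa)_P)$ and, simultaneously, the small-$\eta$ hypothesis of Theorem \ref{tm:ess} is met; everything else is a bookkeeping translation of the abstract estimates of Section \ref{section31}.
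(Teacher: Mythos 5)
Your proposal reproduces the paper's architecture almost step by step: test space $P=E_\infty(\fI)$, identification of the Ritz values of $\mH_\kappa$ from $\ra(P)$ with $\lambda_1^\infty,\ldots,\lambda_m^\infty$, the decomposition $\spec((\mH_\kappa)_P)=\{\mu_j\}_{j=1}^m\cup\spec(\mW_\kappa)$, the gap control as the only non-trivial step, and then the invocation of (\ref{eq:rem}) for the eigenvalues and of (\ref{eq:Sylv}) for the projection. Your treatment of (\ref{e:co3}) is even a bit more explicit than the paper's, since you spell out the monotonicity of $x\mapsto\sqrt{xa}/(x-a)$ that allows replacing $\lambda_{m+1}^\kappa$ by $D$.

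The one place where I would push back is the gap control. Your heuristic split of $\ra(P_\perp)\cap\q$ into the $\je(\fh_e)$ part and its complement does not, by itself, give a uniform lower bound on $\fh_\kappa[w]/\|w\|^2$: for $w=w_1+w_2$ with $w_1\in\je(\fh_e)$, $w_2\perp\je(\fh_e)$, the quadratic form $\fh_b[w]$ contains a cross term $2\Re\fh_b(w_1,w_2)$ of indefinite sign, and the ``$\kappa^2\fh_e[w_2]$ dominates'' reasoning fails in the intermediate regime where $\|w_2\|$ is small but not negligible (this is exactly what the regularity assumption (\ref{eq:BB}) is designed to repair, and it is not available in general). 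So the split is motivation, not a proof. The paper's actual route is cleaner and worth noting: it applies Theorem \ref{t:WCon} directly to the restricted family $\mW_\kappa=(\mH_\kappa)_{\fE_\infty}E_\infty(D)_\perp$, obtaining $\mW_\infty^\dagger=\slim_\kappa\mW_\kappa^\dagger$ and hence $\lambda_1(\mW_\kappa)\to\lambda_{m+1}^\infty>D$, so that for $\kappa$ large $[\lambda_m^\infty,D]$ lies in the resolvent set of $(\mH_\kappa)_{\fE_\infty}$. Your appeal to Theorem \ref{t:WeNorm} serves the same purpose and is essentially equivalent in spirit — but you should let the Weidmann theory carry the weight rather than the hands-on split, and in fact it is simpler to apply it to $\mW_\kappa$ as the paper does than to transfer information from $\mH_\kappa$ to $(\mH_\kappa)_P$ via $\eta_m(\kappa,\fI)$.
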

\begin{proof}
Statement (\ref{e:co0}) is a direct consequence of \cite[Lemma 2.1]{Gru05_3}.
Let us now remember (\ref{eq:rem}). This estimate is the consequence of \cite[Theorem 4.5]{Gru03_3} which,
when applied to the form $\fh_\kappa$ and its $\fE_\infty$ block-diagonal part $(\fh_{\kappa})_{\fE_\infty}$,
yields
$$
(1-\eta_m(\kappa,\fI))\big(\fh_{\kappa})_{\fE_\infty}\leq\fh_\kappa\leq
(1+\eta_m(\kappa,\fI))\big(\fh_{\kappa})_{\fE_\infty}.
$$
Let $\big(\mH_\kappa\big)_{\fE_\infty}$ be the self-adjoint operators which represent the forms
$\big(\fh_{\kappa})_{\fE_\infty}$ in the sense of Kato, then $\lambda_i^\infty\in\spec\big(\mH_\kappa\big)_{\fE_\infty}$. Set
$\mW_\kappa=\big(\mH_\kappa\big)_{\fE_\infty} E_\infty(D)_\perp$
and $\mW_\infty=\mH_\infty E_\infty(D)_\perp$ then Theorem \ref{t:WCon} implies
that $\mW_\infty^\dagger=\slim_{\kappa\to\infty}\mW_\kappa^\dagger$. By the construction
of $\mW_\kappa$ we have $\ra(E_\infty(D))\perp w$ for any $w\in\d(\mW_\kappa)$. This implies
$\lambda_1(\mW_\kappa)\to\lambda_1(\mW_\infty)=\lambda_{m+1}^\infty$.
On the other hand, since $$\spec(\big(\mH_\kappa\big)_{\fE_\infty})=\{\lambda\in\spec(\mH_\infty)~:~\lambda\leq D\}\cup\spec{\mW_\kappa}$$
it follows that there is $\kappa_0$ such that
$$
\big[\lambda_m^\infty,D\big]\subset\R\setminus\spec(\big(\mH_\kappa\big)_{\fE_\infty}),\qquad \kappa>\kappa_0.
$$
Since $\eta_m(\kappa,\fI)\to 0$ we conclude that for $\kappa>\kappa_0$ (here we slightly abuse the notation)
the estimate $\eta_m(\kappa, \fI)\leq\frac{1}{2}\frac{D-\lambda_m^\infty}{D+\lambda_m^\infty}$ holds.
Now, the conclusion (\ref{e:co1}) follows from \cite[Theorem 5.2]{Gru03_3}. Equivalently, the conclusion
(\ref{e:co3}) follows from (\ref{eq:Sylv}) and \cite[Theorem 3.2]{Gru_Ves_Sylv}.
\end{proof}
\begin{remark}\label{rem:below}
The coupling constant $\kappa_0$ is large enough when
$$
\eta_m(\kappa, \fI)<\frac{1}{3}\frac{\lambda_{m+1}^\infty-\lambda_{m}^\infty}{\lambda_{m+1}^\infty+\lambda_m^\infty}
$$
for $\kappa>\kappa_0$. This follows by a similar consideration as in Remark \ref{rem:rel_gap}.
\end{remark}
A direct application of the results from \cite[Section 3]{Gru_Ves_Sylv} and the results of Theorem \ref{t:strong}
is the following corollary.
\begin{corollary}\label{c:working}
Assuming the setting and the notation of the previous theorem we have
$$
\tripleb E_\kappa(D)-E_\infty(D)\tripleb
\leq\frac{\sqrt{D\lambda_m^\infty}}{|D-\lambda_m^\infty|}\frac{\tripleb\diag((\eta_i(\kappa,\fI))_{i=1}^m)
\oplus\diag((\eta_i(\kappa,\fI))_{i=1}^m)\tripleb}
{\sqrt{1-\eta_m(\kappa,\fI)}}.
$$
In the case in which $\fI=\big[D_{-}, D_{+}\big]$ and
$\lambda_{q-1}^\kappa<D_{-}\leq\lambda_{q}^\kappa\leq\lambda_{q+m-1}^{\kappa}\leq D_{+}<\lambda_{q+m}^\kappa$, $\kappa>\kappa_0$
then
\begin{equation}\label{eq:gap}
\| E_\kappa(\fI)-E_\infty(\fI)\|
\leq\Big[\frac{\sqrt{D_{+}\lambda_m^\infty}}{|D_{+}-\lambda_m^\infty|}+
\frac{\sqrt{\lambda_1^\infty D_{-}}}{|\lambda_1^\infty-D_{-}|}
\Big]\frac{\eta_m(\kappa,\fI)}
{\sqrt{1-\eta_m(\kappa,\fI)}}.
\end{equation}
\end{corollary}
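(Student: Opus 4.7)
The plan for the first bound is to observe that it is simply the unitary-invariant-norm version of (\ref{e:co3}). In the proof of Theorem \ref{t:strong} we applied (\ref{eq:Sylv}) with the test subspace $\ra(P)=\ra(E_\infty(D))$ and then passed to the operator norm on the right-hand side; however (\ref{eq:Sylv}) is already stated for an arbitrary cross norm $\tripleb\cdot\tripleb$ of the diagonal matrix $\diag((\eta_i(P))_{i=1}^m)\oplus\diag((\eta_i(P))_{i=1}^m)$. Keeping the general norm instead of the operator norm, and making the substitution $\eta_i(P)=\eta_i(\kappa,\fI)$, yields the claimed bound. The smallness hypothesis on $\eta_m(\kappa,\fI)$ required to invoke Theorem \ref{tm:ess} is furnished by Remark \ref{rem:below} for $\kappa>\kappa_0$, exactly as in the proof of Theorem \ref{t:strong}; no further argument is needed.

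For the two-sided estimate (\ref{eq:gap}) my plan is to reduce to two half-line cutoffs. Writing $E(D)$ for $E((-\infty,D])$ and $E(D^-)$ for its left limit $E((-\infty,D))$, the identity
\[
E_\kappa(\fI)-E_\infty(\fI) \;=\; \bigl[E_\kappa(D_+)-E_\infty(D_+)\bigr] \;-\; \bigl[E_\kappa(D_-^-)-E_\infty(D_-^-)\bigr]
\]
combined with the triangle inequality reduces matters to estimating each half-line difference. Applying the operator-norm bound (\ref{e:co3}) at $D_+$ (cluster below the cutoff) produces the first bracketed factor $\sqrt{D_+\lambda_m^\infty}/|D_+-\lambda_m^\infty|$, and applying the symmetric statement at $D_-$ (cluster above the cutoff, so the roles of $\mu_m$ and $\lambda_{m+1}$ are interchanged) produces the second factor $\sqrt{\lambda_1^\infty D_-}/|\lambda_1^\infty-D_-|$; the interior labeling $\lambda_1^\infty\le\cdots\le\lambda_m^\infty$ refers to the $m$ cluster eigenvalues inside $\fI$.

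The hard step, I expect, is aligning the approximation defects. The direct application of (\ref{e:co3}) to the two half-lines naturally produces defects for the subspaces $\fE_\infty^{(-\infty,D_+]}$ and $\fE_\infty^{(-\infty,D_-)}$, whereas (\ref{eq:gap}) advertises the single quantity $\eta_m(\kappa,\fI)$ associated with the cluster subspace $\fE_\infty^\fI$. For $\kappa>\kappa_0$, Theorems \ref{t:WCon} and \ref{t:strong} guarantee that the components of the two half-line subspaces corresponding to eigenvalues outside $\fI$ are already matched between $\mH_\kappa$ and $\mH_\infty$, so by the min--max definition (\ref{eq:AppDef}) the surplus contributions to those defects are of lower order and can be absorbed into the factor $(1-\eta_m(\kappa,\fI))^{-1/2}$ at the cost of enlarging $\kappa_0$. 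Should this bookkeeping become unwieldy, the cleaner route is to invoke Theorem~3.2 of \cite{Gru_Ves_Sylv} directly in its two-sided form: the abstract sin-$\Theta$ theorem there is designed for a spectral cluster isolated on both sides and delivers (\ref{eq:gap}) in a single step, with $\eta_m(\kappa,\fI)$ appearing naturally as the residual measure.
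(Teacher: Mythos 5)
Your argument for the first inequality is fine and is essentially what the paper does: (\ref{eq:Sylv}) is already stated for an arbitrary unitary--invariant norm $\tripleb\cdot\tripleb$, and the proof of (\ref{e:co3}) specializes it to the operator norm after identifying $\mu_i=\lambda_i^\infty$ (a consequence of $\mH_{\fE_\infty^\fI}^{-1}=(\mH_\infty^\dagger)_{E_\infty(\fI)}$, so the Ritz values from $\fE_\infty^\fI$ are exactly the limit eigenvalues) and replacing $\lambda_{m+1}^\kappa$ by the smaller $D$; keeping the cross norm gives the stated inequality.

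For (\ref{eq:gap}), your primary route via the telescoping identity $E_\kappa(\fI)-E_\infty(\fI)=[E_\kappa(D_+)-E_\infty(D_+)]-[E_\kappa(D_-^-)-E_\infty(D_-^-)]$ and two applications of (\ref{e:co3}) has a genuine gap, and in two places, not just the one you flag. First, each half-line application produces the approximation defect of a half-line test space ($\fE_\infty^{(-\infty,D_+]}$ and $\fE_\infty^{(-\infty,D_-)}$, respectively), not $\eta_m(\kappa,\fI)$. These are built from different block diagonalizations $(\fh_\kappa)_{P}$ and max/min'd over different subspaces, and there is no monotonicity that lets you bound the half-line defect by the cluster defect; your claim that the ``surplus contributions'' are of lower order and can be absorbed into $(1-\eta_m(\kappa,\fI))^{-1/2}$ by enlarging $\kappa_0$ is not justified and I do not see how to make it work. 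Second, the constants come out wrong: (\ref{e:co3}) applied at the cutoff $D_-$ with the cluster \emph{below} the cutoff would produce $\sqrt{D_-\lambda_{q-1}^\infty}/|D_- - \lambda_{q-1}^\infty|$, not the stated $\sqrt{\lambda_1^\infty D_-}/|\lambda_1^\infty - D_-|$ (with $\lambda_1^\infty$ the smallest \emph{cluster} eigenvalue); and applying it ``above'' the cutoff would require controlling an infinite-dimensional projection. The two bracketed factors in (\ref{eq:gap}) are precisely the two relative gap terms of a \emph{two-sided} $\sin\Theta$ theorem for a cluster isolated in both directions, and this is what the paper relies on: the only ``proof'' the paper gives is the remark preceding the corollary that it is a direct application of \cite[Section 3]{Gru_Ves_Sylv} together with Theorem \ref{t:strong}. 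Your fallback --- invoking the two-sided form of the weak-Sylvester $\sin\Theta$ bound from \cite{Gru_Ves_Sylv} directly, with $\eta_m(\kappa,\fI)$ entering naturally as the residual for the cluster --- is the correct route and the one the paper intends; the half-line decomposition should be dropped.
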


An easy comparison with the single operator estimates from Theorem \ref{tm:ess} reveals that,
unlike the spectral family estimate (\ref{e:co3}), the eigenvalue result (\ref{e:co1}) is suboptimal in
the asymptotic setting. The problem is that we can not uniformly apply the estimate (\ref{trece:e_tap6})
on all the operators $\mH_\kappa$, $\kappa>\kappa_0$ since we have no information of the multiplicity
of the eigenvalue $\lambda_i^\kappa$ for all $\kappa>\kappa_0$. We only know the multiplicity of $\lambda_i^\infty$.
The only statement which we can make in general is a lower estimate on the
convergence rate. A way to solve this multiplicity problem will be presented in Section \ref{sec:mult}, for now we only give the following result.
\begin{corollary}\label{c:lower}
Assuming the setting and the notation of Theorem \ref{t:strong} we have
$$
\frac{\lambda^\infty_1}{2\lambda_m^\infty}\sum_{i=1}^m\eta_i^2(\kappa,\fI)
\leq\sum^m_{i=1}\frac{|\lambda_i^\kappa-\lambda^\infty_i|}{\lambda^\infty_i}.
$$
Furthermore, for each $\kappa>0$ we can
chose eigenvectors $v_i^\kappa$, $\mH_\kappa v_i^\kappa=\lambda_i^\kappa v_i^\kappa$, $\|v_i^\kappa\|=1$
and $v_i^\infty$, $\mH_\infty v_i^\infty=\lambda_i^\infty v_i^\infty$, $\|v_i^\infty\|=1$ such that
$$
\frac{\lambda^\infty_1}{2\lambda_m^\infty}\sum_{i=1}^m\eta_i^2(\kappa,\fI)
\leq\sum^m_{i=1}\frac{\fh_\kappa[v_i^\kappa-v^\infty_i]}{\fh\kappa[v^\infty_i]}.
$$
\end{corollary}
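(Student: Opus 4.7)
The plan is to apply the lower bound in (\ref{trece:e_tap6}) from Theorem~\ref{tm:ess} to the operator $\mH_\kappa$ using $\fE_\infty^\fI$ as the test subspace, and then to convert the eigenvalue lower bound into the eigenvector lower bound via the Strang-type identity (\ref{eq:strang}).

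The preparatory step that makes everything work is the observation that the Ritz values of $\fh_\kappa$ on $\fE_\infty^\fI$ coincide with $\lambda_1^\infty,\dots,\lambda_m^\infty$. Indeed, by construction $\fE_\infty^\fI\subset\je(\fh_e)$, so for any $u\in\fE_\infty^\fI$ one has $\fh_\kappa[u]=\fh_b[u]=\fh_\infty[u]$, and consequently the Galerkin operator $(\mH_\kappa^{1/2}E_\infty(\fI))^*(\mH_\kappa^{1/2}E_\infty(\fI))\big|_{\fE_\infty^\fI}$ equals the restriction of $\mH_\infty$ to $\fE_\infty^\fI$. Its eigenpairs are therefore $(\lambda_i^\infty,v_i^\infty)$, $i=1,\dots,m$. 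For $\kappa>\kappa_0$ as in Remark~\ref{rem:below}, the gap hypothesis of Theorem~\ref{tm:ess} is met (with $\lambda_i=\lambda_i^\kappa$ and $\mu_i=\lambda_i^\infty$), so (\ref{trece:e_tap6}) immediately produces the first claimed inequality
$$
\frac{\lambda_1^\infty}{2\lambda_m^\infty}\sum_{i=1}^m\eta_i^2(\kappa,\fI)\leq\sum_{i=1}^m\frac{|\lambda_i^\kappa-\lambda_i^\infty|}{\lambda_i^\infty}.
$$

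For the eigenvector estimate, invoke the identity (\ref{eq:strang}) with the Ritz pair $(\lambda_i^\infty,v_i^\infty)$ and the corresponding exact pair $(\lambda_i^\kappa,v_i^\kappa)$ of $\mH_\kappa$, together with the fact that $v_i^\infty\in\je(\fh_e)$ yields $\fh_\kappa[v_i^\infty]=\fh_b[v_i^\infty]=\lambda_i^\infty$. This gives
$$
\fh_\kappa[v_i^\kappa-v_i^\infty]=\lambda_i^\kappa\,\|v_i^\kappa-v_i^\infty\|^2+(\lambda_i^\infty-\lambda_i^\kappa).
$$
Monotonicity $\fh_\kappa\leq\fh_{\kappa'}$ for $\kappa\leq\kappa'$ (built into Theorem~\ref{t:WCon}) combined with the min-max principle yields $\lambda_i^\kappa\leq\lambda_i^\infty$. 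Dropping the nonnegative $\|v_i^\kappa-v_i^\infty\|^2$ term and dividing by $\fh_\kappa[v_i^\infty]=\lambda_i^\infty$ gives
$$
\frac{\fh_\kappa[v_i^\kappa-v_i^\infty]}{\fh_\kappa[v_i^\infty]}\geq\frac{|\lambda_i^\infty-\lambda_i^\kappa|}{\lambda_i^\infty},
$$
and summing over $i$ and chaining with the first inequality finishes the proof.

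The main point requiring care is simply the identification of the Ritz data with the limit eigendata, which hinges on $\fE_\infty^\fI\subset\je(\fh_e)$; once this is noted, the result reduces to quoting (\ref{trece:e_tap6}) and (\ref{eq:strang}) from the earlier perturbation framework. No genuinely new estimation is needed — the proof is a tidy specialization of the abstract Ritz lower bound to the singular-limit setting.
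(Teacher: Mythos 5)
Your proof is correct and follows the route the paper implies (the paper itself gives no explicit proof of this corollary, presenting it as a direct consequence of Theorem~\ref{tm:ess} applied to $\mH_\kappa$ with test subspace $\fE_\infty^\fI$). The two key observations you make — that the Ritz operator of $\fh_\kappa$ on $\fE_\infty^\fI\subset\je(\fh_e)$ coincides with $\mH_\infty|_{\fE_\infty^\fI}$ so that $\mu_i=\lambda_i^\infty$, and that the Strang identity (\ref{eq:strang}) together with $\fh_\kappa[v_i^\infty]=\lambda_i^\infty$, $\fh_\kappa[v_i^\kappa]=\lambda_i^\kappa$ and the monotonicity $\lambda_i^\kappa\leq\lambda_i^\infty$ yields $\fh_\kappa[v_i^\kappa-v_i^\infty]/\fh_\kappa[v_i^\infty]\geq|\lambda_i^\kappa-\lambda_i^\infty|/\lambda_i^\infty$ — are exactly what is needed, and the gap hypothesis of Theorem~\ref{tm:ess} is indeed supplied by the $\kappa>\kappa_0$ condition from Remark~\ref{rem:below}.
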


One situation in which we can readily obtain upper estimates like those from Theorem \ref{tm:ess} is
the case when we know that $\lambda_i^\infty$ has the multiplicity one. This is frequently a case for the $1D$
differential operators. Also, the
lowest eigenvalue of many Schroedinger operators, like those from \cite{Dancer} have multiplicity one. In what
follows we use $\|\cdot\|_{\mA^{-1}}=\|\mA^{-1/2}\cdot\|$ to denote the standard $\mA^{-1}$-norm, which is
associated to a positive definite operator $\mA$.
\begin{theorem}\label{t:exactness}
Assume the setting and the notation of Theorem \ref{t:strong}, and let $\lambda_q^\infty$, $q\in\N$ be of multiplicity one
then
\begin{align}\label{eq:sharp}
\lim_{\kappa\to\infty}\frac{\frac{\lambda_q^\infty-\lambda_q^\kappa}{\lambda_q^\infty}}{\eta_1^2(\kappa,\lambda_q^\infty)}&=1,\\
\lim_{\kappa\to\infty}\frac{\frac{\fh_\kappa[v_q^\kappa-v_q^\infty]}{\fh_\kappa[v_q^\kappa]}}{\eta_1^2(\kappa,\lambda_q^\infty)}&=1
\label{eq:sharp2}
\end{align}
\end{theorem}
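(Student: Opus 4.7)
The plan is to specialise the Wilkinson--Sj\"ostrand Schur identity (\ref{eq:apply})--(\ref{eq:apply2}) to the rank-one test subspace $\ra(P) = \spa{v_q^\infty}$ and reduce both limits to the decay of the Schur remainder. Because $v_q^\infty \in \je(\fh_e)$, Cauchy--Schwarz forces $\fh_e(v_q^\infty, w) = 0$ for every $w \in \q(\fh_e)$, so the Ritz value is $\kappa$-independent, $\mu_1 = \fh_\kappa[v_q^\infty] = \fh_b[v_q^\infty] = \lambda_q^\infty$. For $\kappa$ large enough that Theorem~\ref{t:strong} (cf.\ Remark~\ref{rem:below}) delivers the gap hypothesis of Theorem~\ref{thm:second}, identities (\ref{eq:apply})--(\ref{eq:apply2}) give the exact formula
\begin{equation*}
\frac{\lambda_q^\infty - \lambda_q^\kappa}{\lambda_q^\infty} \;=\; \eta_1^2(\kappa,\lambda_q^\infty) \;+\; \lambda_q^\kappa\bigl(\Gamma_\kappa,(\mW_\kappa - \lambda_q^\kappa \I)^{-1}\Gamma_\kappa\bigr),
\end{equation*}
where $\|\Gamma_\kappa\|^2 = \eta_1^2(\kappa,\lambda_q^\infty)$ (since $s_1(\Gamma_\kappa) = \eta_1$ in the rank-one case) and $\mW_\kappa$ is the block restriction of $\mH_\kappa$ to $\ra(P_\perp)$. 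Call the last summand $R_\kappa$; conclusion (\ref{eq:sharp}) is equivalent to $R_\kappa/\eta_1^2(\kappa,\lambda_q^\infty) \to 0$.

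To show $R_\kappa = o(\eta_1^2)$ I would work with the scalar spectral measure $\mu_\kappa$ of $\Gamma_\kappa$ against $\mW_\kappa$, writing $R_\kappa = \lambda_q^\kappa\int(w-\lambda_q^\kappa)^{-1}\,d\mu_\kappa(w)$ with $\mu_\kappa(\R) = \eta_1^2$. Splitting the integral at a fixed $M > \lambda_q^\infty$ bounds the tail by $\lambda_q^\kappa\eta_1^2/(M-\lambda_q^\kappa)$, an arbitrarily small fraction of $\eta_1^2$ once $M$ is large. The bounded part is controlled by the key claim $\mu_\kappa([0,M])/\eta_1^2(\kappa) \to 0$ as $\kappa \to \infty$ for each fixed $M$: the construction of Section~\ref{section31} gives the pairing $(w,\Gamma_\kappa v_q^\infty) = \fh_b(v_q^\infty,\mW_\kappa^{-1/2}w)/\sqrt{\lambda_q^\infty}$, and the strong resolvent convergence of Theorem~\ref{t:WCon} forces the low-lying spectrum of $\mW_\kappa$ to track $\spec(\mH_\infty)\setminus\{\lambda_q^\infty\}$, on each of whose limiting eigenvectors $v_j^\infty$ ($j \neq q$) the coupling $\fh_b(v_q^\infty,v_j^\infty) = \fh_\infty(v_q^\infty,v_j^\infty) = 0$ vanishes. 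A standard $\epsilon$--$M$ diagonal argument (take $\kappa\to\infty$ first, then $M\to\infty$) then yields (\ref{eq:sharp}).

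For (\ref{eq:sharp2}) I invoke identity (\ref{eq:strang}) of Theorem~\ref{tm:ess} with $\psi_1 = v_q^\infty$ and $v_1 = v_q^\kappa$,
\begin{equation*}
\frac{\fh_\kappa[v_q^\kappa - v_q^\infty]}{\fh_\kappa[v_q^\kappa]} \;=\; \|v_q^\kappa - v_q^\infty\|^2 + \frac{\lambda_q^\infty - \lambda_q^\kappa}{\lambda_q^\kappa}.
\end{equation*}
Dividing by $\eta_1^2(\kappa,\lambda_q^\infty)$ and using (\ref{eq:sharp}) together with $\lambda_q^\kappa/\lambda_q^\infty \to 1$, the second term tends to $1$, so (\ref{eq:sharp2}) reduces to the bound $\|v_q^\kappa - v_q^\infty\|^2 = o(\eta_1^2)$. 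Expanding $v_q^\infty = \alpha_\kappa v_q^\kappa + \beta_\kappa\phi_\kappa$ in the $\mH_\kappa$-eigenbasis ($\phi_\kappa \perp v_q^\kappa$, $\alpha_\kappa > 0$), one has $\|v_q^\kappa - v_q^\infty\|^2 \sim \beta_\kappa^2 = \sum_{i \neq q}|c_i^\kappa|^2$ with $c_i^\kappa = (v_i^\kappa, v_q^\infty) = \fh_b(v_i^\kappa, v_q^\infty)/\lambda_i^\kappa$ (the last equality by the same rank-one reduction as above); the $\fh_b$-orthogonality argument of the previous paragraph, applied now to the $\mH_\kappa$-spectrum in place of $\mW_\kappa$, forces the tail to be $o(\eta_1^2)$. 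The principal obstacle is precisely the quantitative control of $\mu_\kappa([0,M])/\eta_1^2$ and of $\sum_{i \neq q}|c_i^\kappa|^2$: since Theorem~\ref{t:exactness} is stated for the general non-inhibited stiff class rather than for the regular subclass of Section~\ref{sec:reg}, soft and stiff modes of $\mW_\kappa$ cannot be separated a priori, and one must combine the norm resolvent convergence of Theorem~\ref{t:WeNorm} with the inclusion $\q(\fh_\infty) \subset \q(\fh_b)$ to push the bounded-frequency spectral mass of $\Gamma_\kappa$ below any prescribed fraction of $\eta_1^2(\kappa,\lambda_q^\infty)$.
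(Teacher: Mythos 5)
Your skeleton is the same as the paper's: specialise the Schur identity (\ref{eq:apply})--(\ref{eq:apply2}) to the rank-one test space spanned by $v_q^\infty$, observe that $\mu_1 = \fh_b[v_q^\infty] = \lambda_q^\infty$ because $v_q^\infty\in\je(\fh_e)$ kills the stiff form, obtain the exact remainder formula, and close (\ref{eq:sharp2}) by (\ref{eq:strang}). The paper packages the residual as the $\kappa$-independent functional $\fr_q := \mH_\kappa v_q^\infty - \lambda_q^\infty v_q^\infty = \fh_b(\cdot,v_q^\infty)-\lambda_q^\infty(\cdot,v_q^\infty)$, establishes $\|\fr_q\|^2_{(\mH_\kappa)_{\fE_\infty}^{-1}}=\lambda_q^\infty\,\eta_1^2(\kappa,\lambda_q^\infty)$, and asserts that the remainder term in (\ref{eq:apply2}) is $O\bigl(\|(\mH_\kappa)_{\fE_\infty}^{-1/2}P_{\je(\fh_e)_\perp}\|^2\bigr)$, which Theorem \ref{t:WCon} then sends to zero. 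You instead split the scalar spectral measure of $\Gamma_\kappa v_q^\infty$ against $\mW_\kappa$ at a cutoff $M$; the tail estimate is fine, but the whole weight of the proof lands on the claim $\mu_\kappa([0,M])/\eta_1^2(\kappa,\lambda_q^\infty)\to 0$.

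That claim is precisely where your argument has a genuine gap, and you say so yourself. Strong resolvent convergence (Theorem \ref{t:WCon}) and the $\fh_b$-orthogonality of $v_q^\infty$ to the limiting eigenvectors $v_j^\infty$, $j\ne q$, only deliver $\mu_\kappa([0,M])\to 0$, not the quotient by $\eta_1^2$, which is itself tending to zero at an a priori unknown rate; this is a $0/0$ comparison and qualitative convergence of projections (Theorem \ref{t:WeNorm}) does not quantify it either. What is actually needed is the structural fact that $\fr_q$ annihilates all of $\q_\infty=\q\cap\je(\fh_e)$, so that the dual vector $(\mH_\kappa)_{\fE_\infty}^{-1/2}\fr_q$ is supported (up to a vanishing correction) on the stiff component $\je(\fh_e)_\perp$; only this permits the additional factor $\|(\mH_\kappa)_{\fE_\infty}^{-1/2}P_{\je(\fh_e)_\perp}\|^2$ that makes the remainder a vanishing \emph{fraction} of $\eta_1^2$ rather than merely small. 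This localisation of the residual on the stiff subspace is the single missing idea; without it your $\epsilon$--$M$ diagonal argument cannot close. The same gap propagates into your proof of (\ref{eq:sharp2}): the reduction to $\|v_q^\kappa-v_q^\infty\|^2=o(\eta_1^2)$ via (\ref{eq:strang}) is correct, and your eigenbasis expansion $c_i^\kappa=(v_i^\kappa,v_q^\infty)=\langle\fr_q,v_i^\kappa\rangle/(\lambda_i^\kappa-\lambda_q^\infty)$ is the right identity, but controlling $\sum_{i\ne q}|c_i^\kappa|^2/\eta_1^2$ requires exactly the same stiff-localisation of $\fr_q$ that is missing from the first half.
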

\begin{proof}
By the same argument as above we may assume that we have $\kappa_0$ such that
$$
\eta_1(\kappa, \lambda_q^\infty)\leq\frac{1}{3}\min\{\frac{\lambda_{q+1}^\infty-\lambda_q^\infty}{\lambda_{q+1}^\infty+\lambda_q^\infty},
\frac{\lambda_{q}^\infty-\lambda_{q-1}^\infty}{\lambda_{q}^\infty+\lambda_{q-1}^\infty}\}, \quad\kappa>\kappa_0.
$$
Theorem \ref{t:strong} yields that there exist $D_{-}, D_{+}$ such that $0<D_{-}<\lambda_q^\infty < D_{+}$ and
\begin{align}\label{e:window}
\lambda_{q-1}^\kappa<D_{-}<\lambda^\kappa_q<D_{+}<\lambda_{q+1}^\kappa,\qquad \kappa>\kappa_0.
\end{align}
According to \cite{Gru05_3} we conclude that we may apply the error representation formula (\ref{eq:apply2})
to the operator $\mH_\kappa$ and the test vector $v_q^\infty$, such that
$\mH_\infty v_q^\infty=\lambda_q^\infty v_q^\infty$, $\|v_q^\infty\|=1$. To the vector $v_q^\infty$ we can define
the \textit{residuum} as the functional $\fr_q^\kappa:=\mH_\kappa v_q^\infty-\lambda_q^\infty v_q^\infty$ and
the identity
$$
\|\fr_q^\kappa\|^2_{(\mH_\kappa)_{\fE_\infty}^{-1}}=(v_q^\infty, \mH_\infty v_q^\infty)~\eta^2_1(\kappa,\lambda_q^\infty)
$$ can be
established by an easy computation. Also note the following identities
\begin{align*}
\|\fr_q^\kappa\|_{(\mH_\kappa)_{\fE_\infty}^{-1}}&=\max_{v\in\q\setminus\{0\}}\frac{|\big<\fr_q^\kappa, v\big>|}{\|(\mH_\kappa)_{\fE_\infty}^{1/2}v\|}
= \max_{v\in\q\setminus\{0\}}\frac{|\fh_\kappa(v, v_q^\infty)-(\fh_\kappa)_{\fE_\infty}(v, v_q^\infty)|}
{\|(\mH_\kappa)_{\fE_\infty}^{1/2}v\|}\\&=
\max_{\substack{v\in\q\setminus\{0\}\\
v\perp\je(\fh_e)}}\frac{|\fh_\kappa(v, v_q^\infty)-(\fh_\kappa)_{\fE_\infty}(v, v_q^\infty)|}{\|(\mH_\kappa)_{\fE_\infty}^{1/2}v\|},
\end{align*}
where $\big<\cdot, \cdot\big>$
is the standard duality product.
Analogous manipulation and the error representation formula (\ref{eq:apply2}) yield the conclusion
\begin{align*}
\frac{\frac{\lambda_q^\infty-\lambda_q^\kappa}{\lambda_q^\infty}}{\eta_1^2(\kappa,\lambda_q^\infty)}&=1+
\frac{\lambda_q^\kappa}{\lambda_q^\infty}\frac{( (\mH_\kappa)^{-1}_{\fE_\infty} \fr_q^\kappa,
(\I-\lambda_q^\kappa(\mH_\kappa)_{\fE_\infty}^{-1})^{-1}(\mH_\kappa)^{-1}_{\fE_\infty} \fr_q^\kappa)}
{\eta_1^2(\kappa,\lambda_q^\infty)}\\
&=1+O\big(\|(\mH_\kappa)_{\fE_\infty}^{-1/2} P_{\je(\fh_e)_\perp} \|^2\big).
\end{align*}
Finally, Theorem \ref{t:WCon} implies (\ref{eq:sharp}). The conclusion (\ref{eq:sharp2}) follows from (\ref{eq:strang})
\end{proof}
 We would like
to emphasize that in this result the monotonicity of the family $\fh_\kappa$ played a role. It is possible to
prove the result without the property $\lambda_q^\infty>\lambda_q^\kappa$. The
proof is technically more involved and it does not further the understanding of the problem, so we leave it out.
This theorem establishes that the estimate---which follows directly from Theorem \ref{thm:second}---is sharp.
We formulate this as the following corollary.
\begin{corollary}\label{c:sharp}
Assume the setting of the preceding theorem then
$$
\frac{|\lambda_q^\infty-\lambda_q^\kappa|}{\lambda_q^\infty}\leq\frac{3~\eta_1^2(\kappa,\lambda_q^\infty)}{
\min\{\frac{\lambda_{q+1}^\infty-\lambda_q^\infty}{\lambda_{q+1}^\infty+\lambda_q^\infty},
\frac{\lambda_{q}^\infty-\lambda_{q-1}^\infty}{\lambda_{q}^\infty+\lambda_{q-1}^\infty}\}}
.
$$
This estimate is sharp in the sense of (\ref{eq:sharp}).
\end{corollary}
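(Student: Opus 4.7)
The plan is to apply Theorem \ref{thm:second} to $\mH_\kappa$ with the one-dimensional test subspace $\mathfrak{P} := \spa{v_q^\infty}$ (so $m=1$). The pivotal observation is that the single Ritz value $\mu_1 = \fh_\kappa[v_q^\infty]$ coincides with $\lambda_q^\infty$: since $v_q^\infty \in \q(\fh_\infty) \subset \je(\fh_e)$ by Theorem \ref{t:WCon}(2), the perturbation term $\kappa^2 \fh_e[v_q^\infty]$ in (\ref{eq:def}) vanishes and
$$
\mu_1 \;=\; \fh_\kappa[v_q^\infty] \;=\; \fh_b[v_q^\infty] \;=\; \fh_\infty[v_q^\infty] \;=\; \lambda_q^\infty.
$$
Under this identification, $\eta_1(P)$ from Section \ref{section31} agrees with $\eta_1(\kappa,\lambda_q^\infty)$ of Definition \ref{def:kapprox}, and the estimate (\ref{trece:e_tap4b}) reduces to the scalar bound
$$
\frac{|\lambda_q^\kappa - \lambda_q^\infty|}{\lambda_q^\infty} \;\leq\; \frac{\eta_1^2(\kappa,\lambda_q^\infty)}{\fG_{q,\eta_1(\kappa,\lambda_q^\infty)}}.
$$

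For $\kappa$ sufficiently large the smallness hypothesis required by Theorem \ref{thm:second} is automatic; this is precisely the argument that produced the interval (\ref{e:window}) in the proof of Theorem \ref{t:exactness}. To complete the passage from the display above to the advertised bound, one must replace $\fG_{q,\eta_1(\kappa,\lambda_q^\infty)}$ in the denominator by the limit-spectrum relative gap in the statement. Remark \ref{rem:rel_gap} yields $1/\fG_{q,\eta_1(\kappa,\lambda_q^\infty)} \leq 3/\mathrm{rg}_\kappa$, where $\mathrm{rg}_\kappa$ is the corresponding relative gap formed with $\lambda_q^\kappa$ and $\lambda_{q \pm 1}^\kappa$. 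Since Theorem \ref{t:strong} ensures $\lambda_{q \pm 1}^\kappa \to \lambda_{q \pm 1}^\infty$, the gap $\mathrm{rg}_\kappa$ converges to its limit counterpart and a further enlargement of $\kappa_0$ produces the displayed bound with $\lambda^\infty$ in the denominator.

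The sharpness statement is then inherited verbatim from (\ref{eq:sharp}) of Theorem \ref{t:exactness}: since $|\lambda_q^\infty - \lambda_q^\kappa|/\lambda_q^\infty$ and $\eta_1^2(\kappa,\lambda_q^\infty)$ agree asymptotically up to a factor tending to $1$, neither the quadratic power of the defect nor the defect itself can be improved. The main technical delicacy of the argument is the Ritz-value identification $\mu_1 = \lambda_q^\infty$; it crucially exploits both the additive structure (\ref{eq:def}) and the characterization of $\q(\fh_\infty)$ as a subset of $\je(\fh_e)$ coming from the monotone limit theorem. The remaining manipulations (smallness of $\eta_1$, convergence $\mathrm{rg}_\kappa \to \mathrm{rg}_\infty$) are quantitative but routine given the machinery of Section \ref{section3}.
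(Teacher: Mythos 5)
Your proposal is correct and follows essentially the same path the paper intends: the paper states that the corollary ``follows directly from Theorem~\ref{thm:second}'' (with sharpness inherited from Theorem~\ref{t:exactness}), and you have filled in exactly the details that make this work. The key observation you supply --- that for $\ra(P)=\spa{v_q^\infty}$ the single Ritz value satisfies $\mu_1=\fh_\kappa[v_q^\infty]=\fh_b[v_q^\infty]=\fh_\infty[v_q^\infty]=\lambda_q^\infty$ because $v_q^\infty\in\q(\fh_\infty)\subset\je(\fh_e)$, and that consequently $\eta_1(P)$ from (\ref{eq.sing_val_2}) coincides with $\eta_1(\kappa,\lambda_q^\infty)$ of Definition~\ref{def:kapprox} --- is precisely the bridge between Theorem~\ref{thm:second} and the corollary's statement, and you correctly invoke (\ref{e:window}) from the proof of Theorem~\ref{t:exactness} to certify the simplicity of $\lambda_q^\kappa$ and the smallness hypothesis for $\kappa\ge\kappa_0$. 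The one place where you are a bit loose is in passing from $\fG_{q,\eta_1}$ to the limit relative gap: Remark~\ref{rem:rel_gap}'s bound is formed with eigenvalues of $\mH_\kappa$, and the corollary's denominator uses $\lambda^\infty_{q\pm1}$; your remark that ``a further enlargement of $\kappa_0$'' handles this is correct in spirit but deserves the one-line justification that $\fG_{q,\eta_1(\kappa,\lambda_q^\infty)}\to\max\bigl\{\tfrac{\lambda_q^\infty-\lambda_{q-1}^\infty}{\lambda_{q-1}^\infty},\tfrac{\lambda_{q+1}^\infty-\lambda_q^\infty}{\lambda_{q+1}^\infty}\bigr\}$, which dominates the limit relative gap, so $\fG_{q,\eta_1}\ge\tfrac13\min\bigl\{\tfrac{\lambda_{q+1}^\infty-\lambda_q^\infty}{\lambda_{q+1}^\infty+\lambda_q^\infty},\tfrac{\lambda_q^\infty-\lambda_{q-1}^\infty}{\lambda_q^\infty+\lambda_{q-1}^\infty}\bigr\}$ for $\kappa$ large. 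This is a minor omission, not a gap in the argument.
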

%
%
%
%
%
%

\subsubsection{A concrete example}\label{section321}
Let $\mH_\kappa$ be the operators which are
defined by the family of positive definite
forms
\begin{equation}\label{e_schrod}
\fh_\kappa(u,v)=\int_0^\infty \partial_x u\partial_xv~dx + \kappa^2\int_1^\infty uv~dx,
\quad u,v\in H^1_0(\R_+).
\end{equation}
Theorem \ref{t:WCon} readily yields
$$
\fh_\infty(u, v)=\int_0^1\partial_x u\partial_xv~dx,\qquad u,v\in H^1_0[0,1].
$$
Here we have used $H^1_0[0,1]$ and $H^1_0(\R_{+})$, $\R_{+}:=\big[0,\infty\big>$ to denote the
standard Sobolev spaces. We also identify the functions from
$H^1_0[0,1]$ with their extension by zero to the whole of $\R_{+}$
and write $ H_0^1[0,1]\subset H_0^1(\R_{+})$.
We also formally write
$\mH_\kappa=-\partial_{xx}+\kappa^2\chi_{\left[1,\infty\right>}$ and
$\mH_\infty=-\partial_{xx}$ and chose
\begin{equation}\label{drugo:e_testfunkcija}
u_i(x)=\begin{cases}\sqrt{2}\sin(k \pi x),&0\leq x\leq 1\\
0,& 1\leq x\end{cases}~, i\in\N
\end{equation}
as a test function(s). A simple computation yields that $\lambda^\kappa_i$ is a solution of the equation
\begin{equation}\label{trece:nonlinear}
\sqrt{\kappa^2-\lambda^\kappa}=-\sqrt{\lambda^\kappa}\cot(\sqrt{\lambda^\kappa})
\end{equation}
and we know that each $\lambda_i^\kappa$ has the multiplicity one.
The quotient
$\frac{\lambda^\infty_1-\lambda_1^\kappa}{\lambda^\infty_1}$ can be
represented (for $\kappa\to\infty$) by a convergent Taylor series (see \cite{VeselicPrivate})
\begin{equation}\label{drugo:e_razvoj}
\frac{\lambda^\infty_1-\lambda_1^\kappa}{\lambda^\infty_1}=
2\frac{1}{\kappa}-3\frac{1}{\kappa^2}+8\left(\frac{1}{2!}+\frac{1}{4!}\pi^2\right)
\frac{1}{\kappa^3}-10\left(\frac{1}{2!}+\frac{4}{4!}\pi^2\right)\frac{1}{\kappa^4}+\cdots~.
\end{equation}
Using the Green functions we also directly compute $\eta^2_1(\kappa,\lambda_i^\infty):=\frac{2}{3+\kappa}$. For
computational details see \cite{GruPhd}.

By utilizing the information from (\ref{trece:nonlinear})
we can establish
\begin{equation}\label{eq:ex-window}
\big(1-\sqrt{\frac{2}{3+\kappa}}\big)4\pi^2=:D(\kappa)\leq\lambda_2(\mH),\qquad
\kappa\geq 5,
\end{equation}
which leads, in combination with (\ref{trece:e_tap6}), to the estimate
\begin{equation}\label{eq:mod1D}
\frac{2}{3+\kappa}\leq\frac{\lambda^\infty_1-\lambda_1^\kappa}{\lambda^\infty_1}\leq\frac{D(\kappa)+\pi^2}{D(\kappa)-\pi^2}\frac{2}{3+\kappa}=
\frac{10}{3\kappa}+\frac{1}{\sqrt{\kappa}}O\Big(\frac{1}{\kappa}\Big),
\qquad \kappa \geq 5.
\end{equation}
Similar sharp results can be obtained for other $\lambda_i^\infty$ and using (\ref{eq:strang})
for corresponding eigenvectors. We tacitly leave out the details.
\subsubsection{A remark on higher dimensional singular obstacle problems}
This paradigm has been applied in \cite{GruPhd} to operators which are defined both in $H^1(\R^n)$ as well as
in $H^1(\Omega)$, where $\Omega\subset\R^n$ is a bounded domain. The only ingredient which is necessary is a result on the behavior
of the momenta
\begin{equation}\label{eq:momenta}
(f, \mH^{-1}_\kappa f)-(f, \mH^\dagger_\infty f)=
\int^\infty_{\kappa^2}\|\mH_e^{1/2}\mH_\tau^{-1}f\|^2~d\tau,\qquad f\in\fE_\infty.
\end{equation}
Estimates of such momenta have been obtained on many places in the literature.
We illustrate our point by a consideration of a model problem of the
electro-magnetic waveguide $\mathcal{O}\times\R$, where the section $\mathcal{O}\subset\R^2$ is a smooth and
connected domain. The material $\Omega\subset\mathcal{O}$ of very large conductivity is compactly immersed in
$\mathcal{O}$, which is to say that the closure $\text{cl}(\Omega)$ is contained in $\mathcal{O}$
and that $\Omega$ is bounded. The dielectric material is now modeled by $\mathcal{U}=\mathcal{O}\setminus\Omega$.
Assuming that the boundary of $\Omega$ is sufficiently smooth we study the
eigenvalue problem for
$
\mH_\kappa=-\triangle+\kappa^2\chi_{\Omega}
$.
Here, $\chi_{\Omega}$ is the characteristic function of $\Omega$ and
$\mH_\kappa$ is the operator which is defined in the sense of Kato by the quadratic
form
$$
\fh_\kappa(\psi, \phi)=\int_{\mathcal{O}}\nabla \psi\cdot\nabla\phi+\kappa^2\int_{\mathcal{O}}\chi_{\Omega}\psi\phi, \qquad
\psi,\phi\in \q_\infty:=H^1_0(\mathcal{O})
$$
where $\kappa\in\R_{+}$ and $\nabla$ is the usual gradient operator on $H^1_0(\mathcal{O})$,
the Sobolev space of functions with zero trace on the boundary $\partial\mathcal{O}$.

This problem has been analyzed in \cite{Gru07_1}. Let us assume that $\lambda_m^\infty<D<\lambda_{m+1}^\infty$, for some $m\in\N$.
We compare $P=E_\infty(\lambda_m^\infty)$ and
$Q_\kappa=E_\kappa(\lambda_m^\infty)$,
where $\mH_\kappa=\int\lambda~dE_\kappa(\lambda)$ is the spectral integral in $L^2(\mathcal{O})$ and
$\mH_\infty=\int\lambda~dE_\infty(\lambda)$
is the spectral integral in $L^2(\mathcal{U})$.
Since, as has been shown in \cite{Gru07_1},
$$
(v_i^\infty, \mH_\kappa^{-1}v_i^\infty)-(v_i^\infty,\mH_\infty^\dagger v_i^\infty)
=\frac{1}{\kappa}\frac{1}{(\lambda_i^\infty)^2}\int_{\partial\Omega}\frac{\partial v_i^\infty}{\partial\nu}
\frac{\partial v_i^\infty}{\partial\nu}+O\big(\frac{1}{\kappa^{3/2}}\big),
$$
we have coarse eigenvector estimates
\begin{align}\label{eq:to_be}
\|Q_\kappa-P\|
&\leq\frac{\sqrt{D\lambda^\infty_m}}{
D-\lambda^\infty_m}\frac{1}{\sqrt{\kappa}}+
O\big(\frac{1}{\kappa^{3/4}}\big)\leq\frac{4}{\frac{\lambda_{m+1}^\infty-\lambda_m^\infty}
{\lambda_{m+1}^\infty+\lambda_m^\infty}}\frac{1}{\sqrt{\kappa}},\\
\label{eq:to_be_2}
\min_{i=1,\cdots,m}\frac{\int_{\partial\Omega}
\frac{\partial v_i^\infty}{\partial\nu}
\frac{\partial v_i^\infty}{\partial\nu}}{\lambda_i^\infty}\frac{1}{2\kappa}&\leq\frac{\fh_\kappa[v_i^\kappa-v_i^\infty]}{\fh_\kappa[v_i^\infty]}\leq
\frac{4}{\frac{\lambda_{m+1}^\infty-\lambda_m^\infty}
{\lambda_{m+1}^\infty+\lambda_m^\infty}}\frac{1}{\kappa}
\end{align}
which can be improved in a straight forward manner by bringing the factor $\int_{\partial\Omega}\frac{\partial v_i^\infty}{\partial\nu}
\frac{\partial v_i^\infty}{\partial\nu}$ into estimates, as has been shown in \cite[Section 2.1]{Gru07_1}. The last inequality in (\ref{eq:to_be})
and (\ref{eq:to_be_2}) hold for $\kappa$ large enough.
The optimal eigenvalue estimate can easily be constructed from Theorem \ref{tm:ess} and \ref{t:exactness} and
we know that the eigenvector estimate (\ref{eq:to_be_2}) is optimal in the sense of
(\ref{eq:strang}) and (\ref{eq:sharp2}). Remark \ref{rem:below}
indicates how to assess the radius of convergence of these first order estimate(s).

\subsubsection{Remarks on
(finite) eigenvalues in gaps of essential spectrum and on general converging families $\fh_\kappa$}
We have said that the theory can be applied to eigenvalues which are in the gaps
of essential spectrum. Since we do not consider any model examples which show such behavior (e.g. operators with
periodic boundary conditions) we will only briefly outline a possibility to obtain results like
Theorem \ref{t:exactness} or Theorem \ref{t:strong} in this setting.

In dealing with the eigenvalues in gaps of the essential spectrum we do not have the safe
convergence environment of Theorem \ref{t:WCon}. Instead, we have to have an \textit{a priori}
information that the assumption like (\ref{e:window}) holds. An example of how to obtain this
type of \textit{a priori} information can be seen on the proof of (\ref{eq:ex-window}). To this end we
would like to emphasize that such type of ``precise'' result on the separation of the target eigenvalue
from the unwanted component of the spectrum is an unavoidable ingredient of all constructive spectral estimates.
An assumption like (\ref{e:window}) is equivalent to requiring that eigenvalue $\lambda_q$ be stable under the
perturbation $\fh_\kappa$, see \cite[chapter VIII.4, pp. 437]{Kato76}. For a characterization
of perturbations for which this holds see \cite{Linden1} and references therein.

Given such an estimate---i.e. assuming that $\lambda_q$ is a stable eigenvalue---the appropriate
result from \cite{Gru05_3} or \cite{Gru_Ves_Sylv} can be applied to
obtain convergence estimates. We also emphasize that the theory of \cite{Gru05_3,Gru_Ves_Sylv}
allows for more general spectral intervals $\fI$. To be more precise, to establish an estimate like (\ref{eq:gap})
the spectrum in $\fI\cap\spec(\mH_\infty)$ does not have to be discrete. However, in such
situation we have no guarantee that  $\eta_{\max}(\kappa,\fI)<1$ and obtaining computational formulae
requires much more technical work. The precise use in a given situation is application dependent, but
always follows the procedure outlined in Theorems \ref{t:strong} and \ref{t:exactness}.

In the case in which we consider a general converging family of quadratic forms from \cite{WeidmannScand84} we cannot
conclude that $(\mH^{\dagger}_\infty)_{E_\infty(\fI)}=(\mH_\kappa)^{-1}_{E_\infty(\fI)}$, so we have to use
explicitly computable $\Xi_\kappa^{-1}:=(\mH_\kappa)^{-1}_{E_\infty(\fI)}\big|_{E_\infty(\fI)}$ in (\ref{eq:AppDef})
instead. If we set $\mu_{i}^\kappa:=\lambda_i(\Xi_\kappa)$, then $\mu_i^\kappa$ substitutes for
$\lambda_i^\infty$ in eigenvalue estimates like (\ref{e:co1}), (\ref{eq:sharp}), whereas the
estimates for the spectral projections like (\ref{eq:gap}) remain unchanged, e.g. we have the convergence estimate
$
\|Q_\kappa-P\|\leq\frac{\sqrt{\lambda_{m+1}^\kappa\mu_m^{\kappa}}}{|\lambda_{m+1}^\kappa-\mu_m^{\kappa}|}
\frac{\eta_m(\kappa,P)}{\sqrt{1-\eta_m(\kappa,P)}}
$.

\subsubsection{A method to solve the multiplicity problem}\label{sec:mult}
A tacit assumption in this semiclassical analysis is that the operator $\mH_\infty$ is a well known object.
In order to be able to apply Theorem \ref{tm:ess}
one should establish that there exists $\kappa_0>0$ such that
\begin{equation}\label{drugo:e_vvH}
\lambda^\kappa_{q-1}<
D_{-}<\lambda^\kappa_q=\lambda^\kappa_{q+m-1}< D_{+}<
\lambda^\kappa_{q+m},
\end{equation}
for $\kappa>\kappa_0$.
However, if $m>1$ it is not plausible to expect that (\ref{drugo:e_vvH})
will hold in general. Instead, we will get a tight
cluster of $m$ eigenvalues (counting the eigenvalues
according to their multiplicity) that converge to
$\lambda_q^\infty$. Since we aim to express the spectral information about $\mH_\kappa$ in terms
of the spectrum of $\mH_\infty$ we further opt to give specific values for $D_{-}$ and $D_{+}$ as
functions of the gaps in the spectrum of $\mH_\infty$.

\begin{theorem}\label{t:Multi}
Let the eigenvalues of the operator $\mH_\infty$ be so ordered
that
$
\lambda^\infty_{q-1}<\lambda^\infty_q=\lambda^\infty_{q+m-1}<
\lambda^\infty_{q+m}
$.
Define the measure of the
relative separation of $\lambda_q^\infty$ from the rest of
the spectrum of $\mH_\infty$ as
the number
$$
\gamma_s(\lambda_q^\infty)=
\min\left\{\frac{\lambda^\infty_{q+m}-\lambda_q^\infty}{\lambda^\infty_{q+m}+\lambda_q^\infty},
\frac{\lambda^\infty_{q}-\lambda_{q-1}^\infty}{\lambda^\infty_{q}+\lambda_{q-1}^\infty}\right\}.
$$
There exists $\kappa_0>0$ such that for  $\kappa\geq\kappa_0$
\begin{equation}\label{e_pazi_ sad_3}
\frac{|\lambda^\kappa_{q+i-1}-\lambda^\infty_q|}{\lambda^\infty_q}
<\eta_m(\kappa,\lambda_q^\infty)
\frac{\frac{3\eta_m(\kappa,\lambda_q^\infty)}{\gamma_c(\lambda_m^\infty)}}{1-\frac{3\eta_m(\kappa,\lambda_q^\infty)}
{\gamma_s(\lambda_m^\infty)}},
\quad i=1, \ldots,m.
\end{equation}
\end{theorem}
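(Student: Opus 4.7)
The plan is to take as test subspace $\ra(P) = \fE_\infty = \ra(E_\infty(\{\lambda_q^\infty\}))$, the $m$-dimensional eigenspace of $\mH_\infty$ at $\lambda_q^\infty$. The crucial structural observation is that $\fE_\infty \subset \je(\fh_e)$, so for every $u, v \in \fE_\infty$ one has $\fh_e[u] = 0$ and consequently $\fh_\kappa(u, v) = \fh_b(u, v) = \fh_\infty(u, v) = \lambda_q^\infty (u, v)$, independently of $\kappa$. Therefore the Ritz operator $\Xi_\kappa$ associated with $\mH_\kappa$ and $\ra(P)$ coincides with $\lambda_q^\infty \I$ on $\fE_\infty$, so that all $m$ Ritz values collapse to $\mu_1^\kappa = \cdots = \mu_m^\kappa = \lambda_q^\infty$. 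This single observation removes the multiplicity-matching ambiguity that blocks a direct application of Theorem \ref{tm:ess}.

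Next I would extract $\kappa_0$ from the combination of Theorem \ref{t:WCon} and Theorem \ref{t:strong}. The first gives $\eta_m(\kappa, \lambda_q^\infty) \to 0$; the second, applied once at a threshold $D < \lambda_{q-1}^\infty$ and once at a threshold $\lambda_{q+m-1}^\infty < D < \lambda_{q+m}^\infty$, yields the simultaneous convergence $\lambda_{q-1}^\kappa \to \lambda_{q-1}^\infty$ and $\lambda_{q+m}^\kappa \to \lambda_{q+m}^\infty$. From these one can fix a window $D_- < \lambda_q^\infty < D_+$ with $\lambda_{q-1}^\kappa < D_- \leq \lambda_q^\kappa \leq \lambda_{q+m-1}^\kappa \leq D_+ < \lambda_{q+m}^\kappa$ for $\kappa \geq \kappa_0$, and with both relative gaps $(\lambda_q^\infty - D_-)/(\lambda_q^\infty + D_-)$ and $(D_+ - \lambda_q^\infty)/(D_+ + \lambda_q^\infty)$ tending to $\gamma_s(\lambda_q^\infty)$. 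The factor $3$ in the statement is then exactly the one identified in Remark \ref{rem:rel_gap}, quantifying the passage from the $\kappa$-dependent local gap $\fG_{q, \eta_m(\kappa, \lambda_q^\infty)}$ to the limiting relative separation $\gamma_s(\lambda_q^\infty)$.

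The core estimate would be obtained from the Wilkinson Schur complement identity (\ref{eq:apply})--(\ref{eq:apply2}), interpreted this time as a characterization of the eigenvalues of $\mH_\kappa$ clustered near $\lambda_q^\infty$. Since $\Xi_\kappa = \lambda_q^\infty \I$, writing the identity at $\lambda = \lambda_{q+i-1}^\kappa$ produces
\[
\frac{\lambda_q^\infty - \lambda_{q+i-1}^\kappa}{\lambda_q^\infty} \in \spec\bigl(\Gamma^*(\I - \lambda_{q+i-1}^\kappa \mW_\kappa^{-1})^{-1}\Gamma\bigr), \qquad i = 1, \ldots, m,
\]
where $\Gamma$ is the off-diagonal block of the shifted matrix (\ref{eq:Schur}) and its singular values are precisely $\eta_i(\kappa, \lambda_q^\infty)$ by \cite[Lemma 2.1]{Gru05_3}. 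Bounding the right-hand side in operator norm by $\eta_m^2(\kappa, \lambda_q^\infty)\,\|(\I - \lambda_{q+i-1}^\kappa \mW_\kappa^{-1})^{-1}\|$ and controlling the latter by the window of the previous paragraph (which keeps $\lambda_{q+i-1}^\kappa$ at relative distance at least $\gamma_s(\lambda_q^\infty) - 3\eta_m(\kappa, \lambda_q^\infty)$ from $\spec(\mW_\kappa)$) then produces exactly the asserted inequality.

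The main obstacle is that Theorem \ref{thm:second}, as literally stated, assumes that the true operator eigenvalue has multiplicity $m$, whereas in our situation this multiplicity is carried by the Ritz values, with the $m$ actual eigenvalues $\lambda_{q+i-1}^\kappa$ of $\mH_\kappa$ typically distinct inside the cluster. The Schur complement identity is algebraically symmetric under this role reversal, but one must carefully verify that the Neumann expansion of $(\I - \lambda_{q+i-1}^\kappa \mW_\kappa^{-1})^{-1}$ converges uniformly across the entire cluster and that the resulting bound holds simultaneously for every $i = 1, \ldots, m$ with the single relative-gap constant $\gamma_s(\lambda_q^\infty)$ rather than through an $i$-dependent substitute.
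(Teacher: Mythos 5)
Your route is genuinely different from the paper's. The paper forms the holomorphic family $\fa(\tau)=(\fh_\kappa)_P+\tau\,\delta_P(\fh_\kappa)$, applies Kato's analytic perturbation theory to expand the cluster eigenvalues $\widehat\lambda_i(\tau)$ in a Taylor series with convergence radius $r_0=\gamma_s(\lambda_q^\infty)/\bigl(3\eta_m(\kappa,\lambda_q^\infty)\bigr)$, bounds the higher coefficients by Cauchy's integral inequality, and finishes by observing that the first-order coefficients vanish because $\delta_P(\fh_\kappa)\equiv 0$ on $\ra(P)\times\ra(P)$. You instead read the same cancellation directly off the structural identity $\Xi_\kappa=\lambda_q^\infty\I$ coming from $\fE_\infty\subset\je(\fh_e)$, and then invoke the Schur complement at the true cluster eigenvalues $\lambda_{q+i-1}^\kappa$. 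Both arguments hinge on exactly the same vanishing; yours avoids the complex-analytic machinery and is more elementary, while the paper's produces the constant $\frac{3/\gamma_s}{1-3\eta_m/\gamma_s}$ automatically from the radius of convergence.

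Two points in your sketch need care, and you partly flag them yourself. First, identity (\ref{eq:apply}) is an equality of $m\times m$ matrices proved under the hypothesis $\dim\je\bigl(\delta_P^s(H_q)\bigr)=m$, which fails in your role reversal when $\lambda_{q+i-1}^\kappa$ has multiplicity below $m$; what survives is only the weaker statement that the Schur complement $\bigl(1-\lambda_{q+i-1}^\kappa/\lambda_q^\infty\bigr)\I-\Gamma_\kappa^*\bigl(\I-\lambda_{q+i-1}^\kappa\mW_\kappa^{-1}\bigr)^{-1}\Gamma_\kappa$ is singular whenever $\lambda_{q+i-1}^\kappa\in\spec(\mH_\kappa)\setminus\spec(\mW_\kappa)$. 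That does yield the membership statement you wrote, but you cannot cite (\ref{eq:apply}) verbatim and should state the weaker characterization explicitly. Second, recovering the asserted constant requires a uniform lower bound on the relative distance $\inf_{\mu\in\spec(\mW_\kappa)}\frac{|\mu-\lambda_{q+i-1}^\kappa|}{\mu}$ over the whole cluster; this can be obtained by chaining (\ref{eq:rem}) (comparison of $\spec(\mW_\kappa)$ with the unclustered part of $\spec(\mH_\kappa)$) with the window (\ref{e:window}), but that bookkeeping is precisely where the factor $3$ and the denominator $1-3\eta_m/\gamma_s$ must be verified to emerge. Once these two steps are written out, your Schur-complement argument is a valid and arguably cleaner alternative to the paper's holomorphic perturbation proof.
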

\begin{proof}
Since $\eta_m(\kappa,\lambda_q^\infty)\to 0$,
an argument analogous to the argument that led to Theorem \ref{t:strong} implies that
we can pick $\kappa_0>0$ such that for $\kappa>\kappa_0$
\begin{align}\label{cetvrto:assumption}
\eta_m(\kappa,\lambda_q^\infty)&\leq\frac{1}{3}\gamma_s(\lambda^\infty_q)\\
|\lambda^\kappa_k-\lambda^\infty_q|&
\leq\frac{1}{3}\gamma_s(\lambda^\infty_q)\lambda^\infty_q,\qquad k=q, q+1, \ldots,q+m-1,
\label{drugo:e_splitting}
\\
|\zeta-\lambda_k(\widehat{\mH}_\kappa)|&>\frac{1}{3}\gamma_s(\lambda^\infty_q)\lambda_k(\widehat{\mH}_\kappa)
,\;\; k\not\in\{q, \ldots,q+m-1\},\;\;\zeta\in\fC(\lambda_q^\infty).
\label{drugo:e_splitting2}
\end{align}
Here $\fC(\lambda_q^\infty)$
is the circle in the complex plane with the radius
$\frac{1}{3}\gamma_s(\lambda^\infty_q)\lambda^\infty_q$
and the center $\lambda^\infty_q$.
Assume $\kappa>\kappa_0$ is fixed, then define the family
\begin{equation}\label{drugo:drmacevafamilija}
\fa(\tau)=(\fh_\kappa)_P +\tau\delta_P(\fh_\kappa),\qquad \tau\in\C.
\end{equation}
This is a \textit{holomorphic family of type (B)}
\index{holomorphic family of type (B)} (for the definition see \cite[Chapter VII]{Kato76}).
We know that
\begin{equation}\label{trece:wellscaled}
|\delta_P(\fh_\kappa)[u]|<\eta_m(\kappa,\lambda_q^\infty) (\fh_\kappa)_P[u],\qquad u\in\q,
\end{equation}
so \cite[Theorem VII-4.9 and (VII-4.45)]{Kato76} imply that the resolvent
$$
R(\tau,\zeta)=(\mA(\tau)-\zeta\I)^{-1}
$$
can be represented by a convergent power series in $\tau$ for $\zeta\in\fC(\lambda_q^\infty)$.
The power series for $R(\tau, \zeta)$ converges for every
\begin{equation}\label{drugo:radiuskonv}
|\tau|<r_0=\frac{1}{\eta_m(\kappa,\lambda_q^\infty)}
\inf_{\substack{\zeta\in\fC(\lambda_q^\infty),\\ \lambda\in\spec((\mH_\kappa)_P)}}\frac{|\lambda-\zeta|}{\lambda}=
\frac{1}{\eta_m(\kappa,\lambda_q^\infty)}\frac{1}{3}\gamma_s(\lambda^\infty_q).
\end{equation}
In particular, assumption (\ref{cetvrto:assumption}) implies that the series converges for $\tau=1$.

Define
$$
\widehat{B}(\tau):=-\frac{1}{2\pi i}
\mA(\tau)\int_{\fC(\lambda_q^\infty)} R(\tau,\zeta)~d\zeta,
$$
then $\widehat{B}(\tau)$ is a holomorphic operator family and
there exist $m$ holomorfic functions $\widehat{\lambda}_i(\tau)$ such that
$\widehat{\lambda}_1(\tau), \cdots, \widehat{\lambda}_m(\tau)$ are all
the nonzero eigenvalues of the operator $\widehat{B}(\tau)$.
Due to the assumptions we have made it follows that for $i=1, \ldots,m$
$$
|\widehat{\lambda}_i(\tau)-\lambda^\infty_q|<\frac{1}{3}\gamma_s(\lambda^\infty_q)\lambda^\infty_q,
\qquad|\tau|<r_0.
$$
Cauchy's integral inequality\footnote{For further details see
\cite[Section 8.1.4]{BaumAnalitic} and \cite[Section II-3]{Kato76}.}
for the coefficients of the Taylor expansion implies, for every $i=1, \ldots, m$,
the estimate
$$
|\widehat{\lambda}_i^{(n)}|<
\frac{\frac{1}{3}\gamma_s(\lambda^\infty_q)\lambda^\infty_q}{r_0^n},\qquad n=1,2,\cdots
$$
where
$
\widehat{\lambda}_{i}(\tau)=\lambda^\infty_q+\tau\widehat{\lambda}_{i}^{(1)}
+\tau^2\widehat{\lambda}_{i}^{(2)}+\tau^3\widehat{\lambda}_{i}^{(3)}+\cdots
$.
This yields
$$
|\widehat{\lambda}_i(\tau)-\lambda^\infty_q-\tau\widehat{\lambda}_{i}^{(1)}|<
\frac{\frac{1}{3}\gamma_s(\lambda^\infty_q)\lambda^\infty_q}{r_0}\frac{|\tau|^2}{r_0-|\tau|}
\leq\frac{\frac{1}{3}\gamma_s(\lambda^\infty_q)\lambda^\infty_q}{r_0^2}\frac{|\tau|^2}{1-\frac{|\tau|}{r_0}}
$$
for $|\tau|<r_0$. In particular for $\tau=1$ there exists a permutation
$j_{(\cdot)}$ such that
$\widehat{\lambda}_{j_i}(1)=\lambda^\kappa_{q+i-1}$, $i=1, \ldots,m$ so
$$
|\lambda^\kappa_{q+i-1}-\lambda^\infty_q-\widehat{\lambda}_{j_i}^{(1)}|
<\eta_m(\kappa,\lambda_q^\infty) \lambda^\infty_q ~\frac{3\eta_m(\kappa,\lambda_q^\infty)}{\gamma_c(\lambda^\infty_q)}~
\frac{1}{1-\frac{3\eta_m(\kappa,\lambda_q^\infty)}{\gamma_s(\lambda^\infty_q)}}.
$$
With this is the proof of the theorem finished. To see this note that it was established, in \cite[(VII-4.50)]{Kato76}, that
$
\widehat{\lambda}_{j_i}^{(1)}
$
are the eigenvalues of the matrix $M_{kp}=\delta_P(\fh_\kappa)(u_k,u_p)$,
where $u_k$, $k=1, \ldots, m$ form an orthonormal basis for $\ra(E_\infty[D_{-}D_{+}])$. Since
$$\delta_P(\fh_\kappa)(u, v)=\fh_\kappa(P_\perp u,Pu)+\fh_\kappa(Pu,P_\perp u)=0,\qquad u,v\in \ra(P),
$$
we obtain $\widehat{\lambda}_{j_i}^{(1)}=0$, $i=1, \ldots, m$ and the conclusion follows.
\end{proof}
\begin{remark}
The estimate of this theorem is optimal in the sense of Corollary \ref{c:lower}. The upper estimate
which has a similar form to (\ref{trece:e_tap6}) can be established for the limit eigenvalues
$\lambda_1^\infty\leq\cdots\leq\lambda_m^\infty$.
The the role of the constant from (\ref{trece:e_tap6}) is taken by the
constant $\gamma_{\min}(\lambda_m^\infty):=\min\{\gamma_c(\lambda_i^\infty)~:~i=1, \ldots, m\}$,
as is given by the repeated application of Theorem \ref{t:Multi}.
We leave out the technical details.
\end{remark}
\section{Spectral asymptotic in the regular case}\label{section4}

We now concentrate on the non-inhibited families
\begin{equation}\label{eq:defNI}
\fh_\kappa(u, v)=\fh_b(u, v) +\kappa^2 \fh_e(u, v), \qquad u, v\in\q:=\q(\fh_b)\subset\q(\fh_e),
\end{equation}
which satisfy the additional regularity assumption
that the range of the operator $\mH_e^{1/2}\mH_b^{-1/2}$ is closed in $\H$. As already mentioned
in Section \ref{sec:reg} this is equivalent with
\begin{equation}\label{eq:BB2}
\|(\mH_e^{1/2}\mH_b^{-1/2})^\dagger\|=\fk<\infty.
\end{equation}
With this additional requirement, which has a flavor of Linear Algebra, we can use an adaptation
of the Lagrange-Multiplier
technique to establish an upper estimate for the momenta
\begin{equation}\label{eq:JT}
(f, \mH^{-1}_\kappa f)-(f, \mH_\infty^\dagger f)=\int^\infty_{\kappa^2}\|\mH_e^{1/2}\mH_\tau^{-1}f\|^2~d\tau,
\qquad f\in\q_\infty:=\q(\fh_\infty).
\end{equation}
The lower estimate for (\ref{eq:JT}) follows by an adaptation of the spectral-calculus technique
from \cite{Brasche,Brasche1}. With this we prove the optimality of our approach to spectral asymptotic estimation.

The following lemmata are the main technical results which are needed to estimate the quantities (\ref{eq:JT}).

\begin{lemma}\label{l:En}
Take $f\in\overline{\q_\infty}^{~_{\|\cdot\|}}$, then
$
\fh_\kappa[\mH^{-1}_\kappa f-\mH^\dagger_\infty f]=(f,\mH^{-1}_\kappa f)-(f, \mH^\dagger_\infty f)
$.
\end{lemma}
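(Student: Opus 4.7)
The plan is to expand $\fh_\kappa[\mH_\kappa^{-1}f-\mH_\infty^\dagger f]$ by bilinearity and then reduce each of the three resulting summands to an inner product via the Kato representation theorem, exploiting that the assumption $f\in\overline{\q_\infty}^{~_{\|\cdot\|}}$ forces $\mH_\infty^\dagger f$ to lie inside the ``stiff kernel'' $\je(\fh_e)$, which kills the $\kappa^2\fh_e$-contribution.

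First I would check admissibility of the expansion. The inclusion $\d(\mH_\kappa)\subset\q(\fh_\kappa)$ puts $\mH_\kappa^{-1}f$ in the form domain automatically. For the other term, $f\in\overline{\q_\infty}^{~_{\|\cdot\|}}=\je(\fh_e)$ combined with the definition of the pseudo inverse yields $\mH_\infty^\dagger f\in\d(\mH_\infty)\subset\q_\infty\subset\q(\fh_b)=\q(\fh_\kappa)$, so the polarisation
$$
\fh_\kappa[\mH_\kappa^{-1}f-\mH_\infty^\dagger f]=\fh_\kappa[\mH_\kappa^{-1}f]-2\,\fh_\kappa(\mH_\kappa^{-1}f,\mH_\infty^\dagger f)+\fh_\kappa[\mH_\infty^\dagger f]
$$
is legitimate. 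Kato's identity $\fh_\kappa(v,w)=(\mH_\kappa v,w)$ for $v\in\d(\mH_\kappa)$, $w\in\q(\fh_\kappa)$ then immediately collapses the first summand to $(f,\mH_\kappa^{-1}f)$ and each cross term to $(f,\mH_\infty^\dagger f)$.

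The substance is in the last summand. Splitting $\fh_\kappa[\mH_\infty^\dagger f]=\fh_b[\mH_\infty^\dagger f]+\kappa^2\fh_e[\mH_\infty^\dagger f]$ and using $\mH_\infty^\dagger f\in\q_\infty\subset\je(\fh_e)$, the $\kappa^2$-term vanishes identically. Since $\fh_b$ and $\fh_\infty$ coincide on $\q_\infty$, the remainder equals $\fh_\infty[\mH_\infty^\dagger f]=(\mH_\infty\mH_\infty^\dagger f,\mH_\infty^\dagger f)$, and one more use of $f\in\je(\fh_e)$ (so that $\mH_\infty\mH_\infty^\dagger f=f$) converts this to $(f,\mH_\infty^\dagger f)$. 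Summing the three contributions gives $(f,\mH_\kappa^{-1}f)-2(f,\mH_\infty^\dagger f)+(f,\mH_\infty^\dagger f)$, which is the claimed identity.

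The only delicate point is the cancellation of the $\kappa^2\fh_e$-contribution in $\fh_\kappa[\mH_\infty^\dagger f]$; everything else is straightforward bookkeeping with Kato's first representation theorem. The hypothesis $f\in\overline{\q_\infty}^{~_{\|\cdot\|}}$ is used exactly twice and for exactly this purpose: to place $\mH_\infty^\dagger f$ in $\je(\fh_e)$ and to ensure that $\mH_\infty\mH_\infty^\dagger f=f$ rather than $P_{\je(\fh_e)}f$.
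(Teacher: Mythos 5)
Your proposal is correct and follows essentially the same route as the paper: expand by bilinearity, use the representation theorem to collapse the diagonal term $\fh_\kappa[\mH^{-1}_\kappa f]$ and the cross terms, and reduce $\fh_\kappa[\mH^\dagger_\infty f]$ to $(f,\mH^\dagger_\infty f)$. The paper simply asserts the last identity and manipulates the cross terms via $\mH^{1/2}_\kappa$ factorisations instead of invoking Kato's first representation theorem directly, but these are the same calculation; your version just spells out why $\fh_\kappa[\mH^\dagger_\infty f]=(f,\mH^\dagger_\infty f)$, which the paper leaves implicit.
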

\begin{proof}
The proof is a straight forward computation. Take $f\in\overline{\q_\infty}^{~_{\|\cdot\|}}$, then
$$
h_\kappa[\mH^\dagger_\infty f]=(f,\mH^\dagger_\infty f)
$$
and we have
\begin{align*}
h_\kappa[\mH^{-1}_\kappa f-\mH^\dagger_\infty f]&=(f,\mH^{-1}_\kappa f)-
h_\kappa(\mH^{-1}_\kappa f,\mH^\dagger_\infty f)-h_\kappa(\mH^\dagger_\infty f,\mH^{-1}_\kappa f)
+(f,\mH^\dagger_\infty f)\\
&=(f,\mH^{-1}_\kappa f)-(\mH^{-1/2}_\kappa f,\mH^{1/2}_\kappa\mH^\dagger_\infty f)
-(\mH_\kappa^{1/2}\mH^\dagger_\infty f,\mH^{-1/2}_\kappa f)\\
&\qquad + (f,\mH^\dagger_\infty f)\\
&=(f,\mH^{-1}_\kappa f)-(f,\mH^\dagger_\infty f).
\end{align*}
\end{proof}
\begin{lemma}\label{l:TamL}
Let $f\in\H$ be given then set $r_f:=\mH^{-1/2}_bf-\mH_b^{1/2}\mH_\infty^\dagger f$. If we assume
$\|(\mH_e^{1/2}\mH_b^{-1/2})^\dagger\|<\infty$
then $q_f=(\mH_e^{1/2}\mH_b^{-1/2})^\dagger r_f$ and
$$
\fh_b(\mH^\dagger_\infty f, v)+ (q_f, \mH_e^{1/2} v)= (f, v), \qquad v\in\q.
$$
Furthermore, it holds that $\|r_f\|^2=(f, \mH^{-1}_b f)-(f, \mH_\infty^\dagger f)$.
\end{lemma}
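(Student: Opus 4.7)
The plan is to handle the three assertions of the lemma in the order that makes them cleanest: the norm identity first, then the reformulation of the variational statement as a single linear equation in $\H$, and finally the identification of the multiplier $q_f$ via the closed-range hypothesis.

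First I would establish the norm identity by direct expansion,
\[
\|r_f\|^2=(f,\mH_b^{-1}f)-2(\mH_b^{-1/2}f,\mH_b^{1/2}\mH_\infty^\dagger f)+\fh_b[\mH_\infty^\dagger f],
\]
so that everything reduces to $\fh_b[\mH_\infty^\dagger f]=(f,\mH_\infty^\dagger f)$. Since $\mH_\infty^\dagger f\in\q(\fh_\infty)\subset\je(\fh_e)$, the forms $\fh_b$ and $\fh_\infty$ agree on this vector, and the needed identity is the standard Kato-style representation of $\mH_\infty$ in the $\|\cdot\|$-closure of $\q_\infty$; this is exactly the argument that appears, for $\kappa<\infty$, in Lemma \ref{l:En}.

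Second, I would reformulate the variational identity as an equation in $\H$. Setting $T:=\mH_e^{1/2}\mH_b^{-1/2}$, which is bounded on $\H$ because the embedding $\q(\fh_b)\hookrightarrow\q(\fh_e)$ is continuous, one has $\mH_e^{1/2}v=T\mH_b^{1/2}v$ for every $v\in\q$. Substituting $w=\mH_b^{1/2}v$, which ranges over all of $\H$, and using the self-adjointness of $\mH_b^{\pm 1/2}$, the identity
\[
\fh_b(\mH_\infty^\dagger f,v)+(q_f,\mH_e^{1/2}v)=(f,v),\qquad v\in\q,
\]
collapses to the single linear equation $T^*q_f=r_f$ in $\H$.

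Third, I would verify this linear equation using the regularity hypothesis. A short computation gives $\je(T)=\mH_b^{1/2}(\q\cap\je(\fh_e))$, so the compatibility condition $r_f\perp\je(T)$ reduces to showing $(r_f,\mH_b^{1/2}u)=0$ for every $u\in\q_\infty$, and this rearranges to $(f,u)-\fh_b(\mH_\infty^\dagger f,u)=0$, which is once more the defining relation for $\mH_\infty^\dagger f$ that was used in the first step. The hypothesis $\|T^\dagger\|<\infty$ says $\ra(T^*)$ is closed in $\H$, so $r_f\in\ra(T^*)$, and the Moore--Penrose formalism then produces $q_f$ as the canonical (minimum-norm) solution of $T^*q_f=r_f$, matching the explicit expression in terms of $(\mH_e^{1/2}\mH_b^{-1/2})^\dagger$ claimed in the lemma.

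The main obstacle is the bookkeeping around domains and adjoints: because $\mH_e^{1/2}$ is unbounded and $\mH_\infty^\dagger f$ is determined only up to elements of $\je(\fh_e)^\perp$, each application of self-adjointness for $\mH_b^{\pm 1/2}$ and each use of a Moore--Penrose identity has to be checked on vectors actually living in the proper closed subspaces of $\H$; the closed-range assumption on $T$ is precisely what lets that bookkeeping close up.
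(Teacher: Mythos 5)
Your approach is essentially the paper's: both proofs establish $r_f\perp\mH_b^{1/2}\q_\infty=\je(\mH_e^{1/2}\mH_b^{-1/2})$, use the closed-range decomposition $\H=\ra(\mB^*)\oplus\je(\mB)$ to place $r_f\in\ra(\mB^*)$, and then identify $q_f$ via Moore--Penrose; your reformulation of the variational identity as the single equation $T^*q_f=r_f$ and your sketch of the norm identity (which the paper asserts without proof) are helpful but purely organizational. One small caveat: the minimum-norm solution of $T^*q_f=r_f$ is $(T^*)^\dagger r_f$, not $T^\dagger r_f$, and indeed the paper's displayed computation uses $\mB^{*\dagger}r_f$ rather than $\mB^\dagger r_f$; so your closing claim to ``match the explicit expression $\ldots$ claimed in the lemma'' should acknowledge that the lemma's written formula $q_f=(\mH_e^{1/2}\mH_b^{-1/2})^\dagger r_f$ is to be read with $\mB^{*\dagger}$ in place of $\mB^\dagger$.
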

\begin{proof}
It holds that $r_f\perp\mH_b^{1/2}\q_\infty$, which can be checked by a direct computation. The operator
$\mB:=(\mH_e^{1/2}\mH_b^{-1/2})$ has the closed range so
$$
\H=\ra(\mB^*)\oplus\je(\mB)=
\ra(\mB^*)\oplus\mH_b^{1/2}\q_\infty.
$$
Therefore we have $r_f\in\ra(\mB^*)$ and so we may write $q_f:=\mB^\dagger r_f$. A direct computation
now shows that
\begin{align*}
\fh_b(\mH^\dagger_\infty f, v)+ (q_f, \mH_e^{1/2} v)&=(\mH_b^{1/2}\mH_\infty^\dagger f, \mH_b^{1/2}v)
+(\mB^{*\dagger}r_f, \mB\mH_b^{1/2} v)\\
&=(\mH_b^{1/2}\mH_\infty^\dagger f + r_f, \mH_b^{1/2}v)=(f, v).
\end{align*}
\end{proof}
The main quantitative theorem about the asymptotic behavior of (\ref{eq:defNI}) follows now directly.
\begin{theorem}\label{t:GT}
Assume $\fk:=\|(\mH_e^{1/2}\mH_b^{-1/2})^\dagger\|<\infty$ then we have; for $f\in\overline{\q_\infty}^{~_{\|\cdot\|}}$;
\begin{equation}\label{eq:uTG}
\frac{{(f, \mH_1^{-1}f)\!-\!(f, \mH_\infty^\dagger f)}}{\kappa^2}\leq{(f, \mH^{-1}_\kappa f)-(f, \mH_\infty^\dagger f)}\leq
\frac{\fk^2\big({(f, \mH_b^{-1}f)\!\!-\!\!(f, \mH_\infty^\dagger f)}\big)}{\kappa^2}
\end{equation}
and
\begin{equation}\label{eq:lTG}
\frac{1}{\kappa^2}\eta_i^2(1,\lambda^\infty)\leq\eta_i^2(\kappa,\lambda^\infty)\leq\frac{\fk^2}{\kappa^2}\eta_i^2(0,\lambda^\infty),
\qquad i=1,\ldots,m~,
\end{equation}
where $m$ is the multiplicity of the discrete eigenvalue $\lambda^\infty$
(not necessarily below the infimum of the essential spectrum of $\mH_\infty$).
\end{theorem}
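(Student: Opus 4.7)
The plan is to prove the two chains (\ref{eq:uTG}) and then specialize to an eigenspace of $\lambda^\infty$ to obtain (\ref{eq:lTG}). The upper bound in (\ref{eq:uTG}) is where the regularity hypothesis $\fk<\infty$ enters decisively; the lower bound is structural and relies only on the integral representation (\ref{eq:JT}) together with a monotonicity property of its integrand.

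For the upper bound I would argue variationally. Set $w_\kappa := \mH_\kappa^{-1}f - \mH_\infty^\dagger f \in \q$. Lemma \ref{l:En} rewrites the quantity to be bounded as $\fh_\kappa[w_\kappa]$. Since $\mH_\infty^\dagger f \in \je(\fh_e)$ one has $\fh_e(\mH_\infty^\dagger f, v) = 0$, so combining the defining identity $\fh_\kappa(\mH_\kappa^{-1}f,v) = (f,v)$ with the mixed identity from Lemma \ref{l:TamL} produces
\begin{equation*}
\fh_\kappa(w_\kappa, v) = (q_f, \mH_e^{1/2} v), \qquad v \in \q.
\end{equation*}
Testing with $v = w_\kappa$, Cauchy--Schwarz and the elementary bound $\|\mH_e^{1/2} w_\kappa\| \leq \kappa^{-1}\fh_\kappa[w_\kappa]^{1/2}$ (from $\kappa^2\mH_e \leq \mH_\kappa$) give $\fh_\kappa[w_\kappa] \leq \|q_f\|^2/\kappa^2$. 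The regularity hypothesis yields $\|q_f\| = \|(\mH_e^{1/2}\mH_b^{-1/2})^\dagger r_f\| \leq \fk\|r_f\|$, and Lemma \ref{l:TamL} identifies $\|r_f\|^2 = (f, \mH_b^{-1}f) - (f, \mH_\infty^\dagger f)$, finishing this half.

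For the lower bound I would establish that $g(\tau) := \|\mH_e^{1/2}\mH_\tau^{-1}f\|^2$ has the property that $\tau^2 g(\tau)$ is nondecreasing on $(0, \infty)$. A direct differentiation gives $\frac{d}{d\tau}[\tau^2 g(\tau)] = 2\tau(\phi, (\I-\tau K_\tau)\phi)$ with $\phi = \mH_e^{1/2}\mH_\tau^{-1}f$ and $K_\tau = \mH_e^{1/2}\mH_\tau^{-1}\mH_e^{1/2}$. Since $\tau\mH_e \leq \mH_b + \tau\mH_e = \mH_\tau$, the standard consequence $\|\mH_\tau^{-1/2}(\tau\mH_e)^{1/2}\|\leq 1$ gives $\tau K_\tau \leq \I$ and hence a nonnegative derivative. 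The substitution $\tau = \kappa^2 s$ in (\ref{eq:JT}) then yields, pointwise, $\kappa^4 g(\kappa^2 s) \geq g(s)$ for $s \geq 1$, and integrating delivers
\begin{equation*}
(f,\mH_\kappa^{-1}f) - (f,\mH_\infty^\dagger f) \geq \frac{1}{\kappa^2}\bigl[(f,\mH_1^{-1}f) - (f,\mH_\infty^\dagger f)\bigr].
\end{equation*}

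Finally, (\ref{eq:lTG}) is obtained by applying (\ref{eq:uTG}) with $f = \psi$ ranging over the eigenspace of $\lambda^\infty$, on which $(\psi, \mH_\infty^\dagger\psi)/\|\psi\|^2 = 1/\lambda^\infty$ is constant. The pointwise bounds translate into operator inequalities on the finite-dimensional compression $P_{\fE_\infty}\mH_\kappa^{-1}P_{\fE_\infty}$, and the Courant--Fischer characterization of $\eta_i^2(\kappa,\lambda^\infty)$ lifts them to bounds on the $i$-th eigenvalue of this block, with the $\kappa$-dependent denominators handled by the monotonicity chain $\mH_b^{-1} \geq \mH_\kappa^{-1} \geq \mH_\infty^\dagger$ on this eigenspace. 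The main obstacle I anticipate is the operator inequality $\tau K_\tau \leq \I$ behind the monotonicity of $\tau^2 g(\tau)$: since $\mH_b$ and $\mH_e$ need not commute one must invoke the standard fact that $B \leq A$ implies $\|A^{-1/2}B^{1/2}\|\leq 1$, applied with $A = \mH_\tau$ and $B = \tau\mH_e$; everything else reduces to bookkeeping around Lemmata \ref{l:En} and \ref{l:TamL} and the min--max.
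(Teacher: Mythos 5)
Your proof of the upper bound in (\ref{eq:uTG}) is essentially the paper's own: the same use of Lemmata \ref{l:En} and \ref{l:TamL}, the same subtraction of the two weak formulations, and the same Cauchy--Schwarz plus $\kappa^2\fh_e\leq\fh_\kappa$ bound; you merely absorb the algebra in one step where the author first isolates $\kappa^2\fh_e[\mH_\kappa^{-1}f]\leq\|q_f\|\,\fh_e[\mH_\kappa^{-1}f]^{1/2}$ and then combines.

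Your lower bound in (\ref{eq:uTG}) takes a genuinely different route. The paper imports Brasche--Demuth's spectral representation of the resolvent difference in terms of $E_{\mH_e}$, rewrites it as $((\I+\kappa^2\mH_e)^{-1}\mH_b^{-1}f,\mH_b^{-1}f)$, and uses the scalar inequality $(1+\kappa^2\lambda)^{-1}\geq\kappa^{-2}(1+\lambda)^{-1}$ for $\kappa\geq 1$. You instead work directly on the integrand $g(\tau)=\|\mH_e^{1/2}\mH_\tau^{-1}f\|^2$ of the given representation (\ref{eq:JT}), observe that $\tau^2 g(\tau)$ is nondecreasing because $\tau\mH_e\leq\mH_\tau$ yields $\tau K_\tau\leq\I$ via $\|\mH_\tau^{-1/2}(\tau\mH_e)^{1/2}\|\leq 1$, and then compare $\int_{\kappa^2}^\infty g$ with $\int_1^\infty g$ by substitution. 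This is self-contained, avoids reaching outside the paper's own machinery, and---unlike the imported formula, which needs care with the $\lambda=0$ part of $E_{\mH_e}$---does not involve any null-space bookkeeping, because the inequality $\tau\mH_e\leq\mH_\tau$ holds on all of $\q$. Both routes are correct and deliver exactly the same constant.

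Where I think you are underselling the difficulty is the last step (\ref{eq:lTG}). The lower inequality in (\ref{eq:lTG}) does reduce to bookkeeping: write $a_\kappa(\psi)=(\psi,\mH_\kappa^{-1}\psi)$, $c=(\psi,\mH_\infty^\dagger\psi)$; since $a_\kappa-c\geq\kappa^{-2}(a_1-c)$ and simultaneously $a_\kappa\leq a_1$, both numerator and denominator of $\frac{a_\kappa-c}{a_\kappa}$ cooperate and the Courant--Fischer max--min transports the pointwise inequality. But for the upper inequality the two denominators pull in opposite directions: (\ref{eq:uTG}) gives $a_\kappa-c\leq\frac{\fk^2}{\kappa^2}(a_0-c)$, while $a_\kappa\leq a_0$, so the naive chain
\begin{equation*}
\frac{a_\kappa-c}{a_\kappa}\leq\frac{\fk^2}{\kappa^2}\frac{a_0-c}{a_\kappa}
\end{equation*}
has $a_\kappa$, not $a_0$, in the denominator, and replacing it by $a_0$ would require $a_\kappa\geq a_0$, which is false. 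The best the pointwise bound gives is $\frac{\fk^2}{\kappa^2}\frac{a_0-c}{c}=\frac{\fk^2}{\kappa^2}\frac{\eta_i^2(0,\lambda^\infty)}{1-\eta_i^2(0,\lambda^\infty)}$, an extra factor $(1-\eta_i^2(0,\lambda^\infty))^{-1}>1$. (This discrepancy is already present in the paper, which invokes the same monotonicity chain without further comment.) If you want to assert (\ref{eq:lTG}) as stated you need a separate argument controlling that factor; otherwise you should state the weaker bound. So: (\ref{eq:uTG}) is fully proved, and your lower-bound argument is a nice improvement on the paper's, but ``(\ref{eq:lTG}) is obtained by applying (\ref{eq:uTG}) \ldots and the min--max'' is not yet a proof of (\ref{eq:lTG})'s upper half.
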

\begin{proof}
For any $f\in\H$ we have
\begin{align*}
\fh_b(\mH^\dagger_\infty f,v)+(q_f,\mH^{1/2}_e v)&=(f,v),\qquad v\in\q,\\
\fh_b(\mH^{-1}_\kappa f,v)+\kappa^2\fh_e(\mH^{-1}_\kappa f,v)&=(f,v),\qquad v\in\q.
\end{align*}
which implies
$$
\fh_b(\mH^{-1}_\kappa f-\mH^\dagger_\infty f, v)+\kappa^2\fh_e(\mH_\kappa^{-1} f, v)=(q_f, \mH_e^{1/2}v)
$$
and subsequently
$$
\kappa^2 \fh_e[\mH_\kappa^{-1}f]\leq\|q_f\|\fh_e[\mH_\kappa^{-1}f]^{1/2}.
$$
The right inequality in (\ref{eq:uTG}) follows from Lemma \ref{l:TamL}. To establish the left inequality
of (\ref{eq:uTG}) we start
from the identity \cite[(22)]{Brasche}. We combine the integral representation for $(f,\mH^{-1}_\kappa
f)-(f, \mH^\dagger_\infty f)$ from \cite[pp. 41]{Brasche} and \cite[(29)]{Brasche} to obtain
\begin{align*}
(f,\mH^{-1}_\kappa f)-(f, \mH^\dagger_\infty f)&=\int^\infty_0
\frac{1}{\lambda+\kappa^2\lambda^2}
(d E_{\mH_e}(\lambda)\mH_e^{1/2}\mH^{-1}_b f, \mH_e^{1/2}\mH^{-1}_b f)\\
&=((\I+\kappa^2\mH_e)^{-1}\mH_b^{-1} f, \mH_b^{-1} f)\\
&\geq\frac{1}{\kappa^2}((\I+\mH_e)^{-1}\mH_b^{-1} f, \mH_b^{-1} f)\\
&=\frac{1}{\kappa^2}\big((f,\mH^{-1}_1 f)-(f, \mH^\dagger_\infty f)\big).
\end{align*}
The conclusion (\ref{eq:lTG}) for the approximation defects follows directly from
the definition (\ref{eq:AppDef}) and the observaton that
$$
\frac{1}{(f,\mH^{-1}_1 f)}\leq\frac{1}{(f, \mH^{-1}_\kappa f)}\leq\frac{1}{(f, \mH_\infty^\dagger f)},\qquad
f\in\ra\big(E_\infty(\{\lambda^q\})\big),
$$
holds. This completes the argument.
\end{proof}
 \begin{example}\label{vazni_primjer}
 We will present this example as an abstract variation on (\ref{e:heat}).
 Let $\mH$ be a positive definite operator, let $P$ be a projection, $\ra(P)\subset\d(\mH^{1/2})$
 and let $r^\kappa_f:=\mH_\kappa^{-1/2}f-\mH_\kappa^{1/2}\mH_\infty^\dagger f$. Consider
 $$
 \fh_\kappa(u, v)=((\I+\kappa^2P)\mH^{1/2} u, \mH^{1/2} v)=\fh_b(u,v)+\kappa^2\fh_e(u,v),
 $$
 then
 $$
 \|(\mH^{1/2}_e\mH^{-1/2}_b)^\dagger\|\leq 1
 $$
 and (\ref{eq:uTG}) gives for $f\in\je(P\mH_e^{1/2})$
\begin{equation}\label{drugo:s1}
\frac{\|r_f^\kappa\|^2}{\|r_f\|^2}=
\frac{ (f, \mH^{-1}_\kappa f)-(f, \mH^{-1/2}P_\perp\mH^{-1/2} f)}
{(\mH^{-1/2}f, P\mH^{-1/2} f)}\leq\frac{1}{\kappa^2}.
\end{equation}
Here we have used $\|r_f\|^2=(f, \mH_b^{-1}f) - (f, \mH_\infty^\dagger f)$
and $\|r_f^\kappa\|^2=(f, \mH_\kappa^{-1}f) - (f, \mH_\infty^\dagger f)$ to simplify the notation.
On the other hand, we compute
 $$
 \mH_\kappa^{-1}=\mH^{-1/2}(P_\perp +\frac{1}{1+\kappa^2}P)\mH^{-1/2}
 $$
to establish
\begin{equation}\label{drugo:s2}
(f, \mH_\kappa^{-1}f)-(f, \mH^{-1/2}P_\perp \mH^{-1/2}f) =\frac{1}{1+\kappa^2}(\mH^{-1/2}f, P\mH^{-1/2}f).
\end{equation}
Formulae (\ref{drugo:s1}) and (\ref{drugo:s2}) give
$$
\frac{1}{1+\kappa^2}=\frac{\|r_f^\kappa\|^2}{\|r_f\|^2}\leq\frac{1}{\kappa^2},
$$
which is a very favorable estimate for $\kappa$ large.
The lower estimate can be computed to be
$$
\frac{1}{2\kappa^2}\leq\frac{\|r_f^\kappa\|^2}{\|r_f\|^2},
$$
which is not as sharp as the upper estimate, but it is---newer the less---asymptotically optimal.
 \end{example}

\section{A model problem from 1D theory of elasticity}\label{s:Arch}
 As an illustration of the applicability of Theorem \ref{t:GT}, we consider the small frequency problem
for the circular arch as described in \cite[Chapter 8.8:3]{CiarletV4y78} and \cite{SanchezPalencia90},
cf. Figure \ref{drugo:f_curvedrod}.
Let $\mathbf{\phi}:[0,l]\to\R^2$ be the middle curve of the arch. We take $\mathbf{\phi}$
to be the upper part of the circle with the radius $R$.
The arch (the model problem we are considering) will be a thin homogeneous, elastic body
of the constant cross-section $\mathcal{A}$, whose area is $A>0$. The arch will be clamped
at one end and free at the other. The strain energy of the arch is given\footnote{See also
\cite{TamTeza}.}
by the positive definite form
\begin{align}
\fa(\mathbf{u},\mathbf{v})&=EI\int_{0}^l\left(u_2'+\frac{u_1}{R}\right)'
\left(v_2'+\frac{v_1}{R}\right)'~d s+
EA\int_{0}^l\left(u_1'-\frac{u_2}{R}\right)\left(v_1'-\frac{v_2}{R}\right)~d s,\label{e_energijaluka}\\
\nonumber&\mathbf{u},\mathbf{v}\in
\q(a)=\{\mathbf{u}\in H^1[0,l]\times H^2[0,l]~:~\mathbf{v}(0)=0,v'_2(0)=0\}.
\end{align}
Here $\mathbf{u}=(u_1,u_2)$ and $\mathbf{v}=(v_1, v_2)$ are the functions of the curvilinear abscissa
$s\in[0,l]$´, the constant $E$ is the Young modulus of elasticity, the constant $A$ is
the area of the cross-section $\mathcal{A}$
and the constant $I$ is the moment of inertia of the cross-section $\mathcal{A}$
.

Let us assume we have the referent arch with the cross-section area
$A$ and the cross-section moment $I$. We consider the family of rods
whose cross-section and the moment of inertia of the cross-section behave like
$$
A_\kappa=\frac{1}{\kappa^2}A=\varepsilon^2A,\qquad I_\kappa=\frac{1}{\kappa^4}I=\varepsilon^4 I.
$$
We want to study the spectral properties of this family of arches as $\varepsilon\to 0$.
More general arch models can be treated by analogous procedures. This is a subject for future reports.
\begin{figure}[t]
\begin{center}\includegraphics[width=9cm]{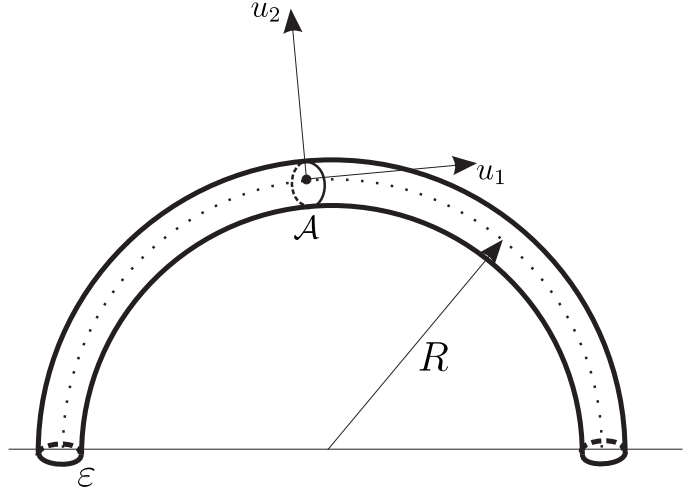}\end{center}
\caption{The Curved rod model}\label{drugo:f_curvedrod}
\end{figure}

For some given $\kappa>0$, $\kappa:=\varepsilon^{-1}$, we write
\begin{align*}
\fa_\kappa(\mathbf{u},\mathbf{v})
&=\frac{E~ I}{\kappa^4}\int_{0}^l\!\!\left(u_2'+\frac{u_1}{R}\right)'\!
\left(v_2'+\frac{v_1}{R}\right)'\text{d}s
+\frac{E~A}{\kappa^2}\int_{0}^l\!\!\left(u_1'-\frac{u_2}{R}\right)\!\left(v_1'-\frac{v_2}{R}\right)d s
\end{align*}
and use $\mA_\kappa$ to denote the operator which is defined by $\fa_\kappa$.
Since $\mathbf{A}_\kappa$ has only the discrete spectrum we write $\lambda_i(\mathbf{A}_\kappa)$,
$i\in\N$. After rescaling
$$
\lambda_i(\mathbf{A}_\kappa)=\frac{1}{\kappa^4}\lambda_i^\kappa
$$
we see that $\lambda^\kappa_i$ are the eigenvalues of the operator $\mH_\kappa$, which
is defined by
\begin{align*}
\fh_\kappa(\mathbf{u},\mathbf{v})&=\fh_b(\mathbf{u},\mathbf{v})
+\kappa^2\fh_e(\mathbf{u},\mathbf{v})\\
&=EI\int_{0}^l\left(u_2'+\frac{u_1}{R}\right)'
\left(v_2'+\frac{v_1}{R}\right)'~d s+
\kappa^2EA\int_{0}^l\left(u_1'-\frac{u_2}{R}\right)\left(v_1'-\frac{v_2}{R}\right)~d s
\end{align*}
for $\mathbf{u},\mathbf{v}\in\q(\fa_\kappa)=\q(\fh_\kappa)$.
Since $\lambda_i^\kappa$ enable us to describe only the eigenvalues of $\mathbf{A}_\kappa$ for
which
$$
\lim_{\kappa\to \infty}\frac{1}{\kappa^4}\lambda_i(\mathbf{A}_\kappa)<\infty.
$$
here we see where the name ``\textit{low frequency problem}"\index{frequency!low frequency problem},
for the eigenvalue problem for $\mH_\kappa$, comes from. The low frequency problem
satisfies the conditions of Theorem \ref{t:WCon}, so we conclude
that the limiting form is
\begin{equation}\label{e_tambacastap}
\fh_\infty(\mathbf{u},\mathbf{v})=EI\int_{0}^l\left(u_2'+\frac{u_1}{R}\right)'
\left(v_2'+\frac{v_1}{R}\right)'~\text{d}s,\quad
\mathbf{u},\mathbf{v}\in\{\mathbf{f}\in\q(a_\kappa),f_1'-\frac{f_2}{R}=0\}.
\end{equation}
In \cite{TamTeza} it has shown that (\ref{e_tambacastap}) is the strain energy of the Curved Rod
Model and that $\fh_\kappa$, $\kappa >0$ are positive definite with $$
\q(h_\kappa)=\{\mathbf{u}\in H^1[0,l]\times H^2[0,l]~:~\mathbf{v}(0)=0,v'_2(0)=0\}.
$$

\begin{remark}\label{trece:curvedrodmodel}
From (\ref{e_tambacastap}) we can see the significance of the condition
\begin{equation}\label{trece:inextensibility}
f_1'-\frac{f_2}{R}=0.
\end{equation}
Assume the rod is locally straight. That is to say, assume $R\to\infty$, then (\ref{trece:inextensibility})
turns into
$$
f_1'=0,
$$
a condition of the inextensibility of the middle curve of the straight rod. The fact that
$f_1'-\frac{f_2}{R}=0$ is an \textit{inextensibility condition}\index{inextensibility condition}
for the middle curve of the curved rod can be established by a rigorous differential
geometric argument, see \cite{TamTeza}. Continuing this heuristic reasoning, we conclude
that Curved Rod model describes the transversal vibrations (perpendicular
to the middle curve) of the curved rod. Arch Model ``couples"
the longitudinal vibrations of the rod with the transversal vibrations.
The study of finer properties of longitudinal vibrations requires the analysis of the so
called ``\textit{middle frequency problem}", which will not be further considered here. However, since the
``\textit{middle frequency problem}" also falls under the scope of Theorem \ref{t:WCon} this theory could also be
applied in that case, too.
\end{remark}

\subsection{Computational details}
Based on (\ref{e_energijaluka}) and (\ref{e_tambacastap}) one concludes that
the sequence $\fh_\kappa$ satisfies the assumptions of Theorem \ref{t:GT}.
Here is a word of additional explanation in order. We have formulated all of our results
about the forms $\fh_b$ and $\fh_e$ based
on the representations
\begin{align*}
\fh_b(u,v)&=(\mH^{1/2}_b u,\mH^{1/2}_b v),\\
\fh_e(u,v)&=(\mH^{1/2}_e u,\mH^{1/2}_e v).
\end{align*}
However, we can represent (see (\ref{e_tambacastap})) the forms
$\fh_b$ and $\fh_e$ with the help of the operators $\mR_b:\q(\fh_b)\to\H_b$ and $\mR_e:\q(\fh_e)\to\H_e$.
The only assumptions on the operators $\mR_b$ (and $\mR_e$) is that they have a closed range in the
auxiliary Hilbert spaces $\H_b$ (and $\H_e$), cf. \cite{GruVes02}. The representation
theorem for the nonnegative definite forms implies
\begin{align}
\fh_b(u,v)&=(\mH^{1/2}_b u,\mH^{1/2}_b v)=(\mR_b u,\mR_b v)_{\H_b}\label{e_cholesky1},\\
\fh_e(u,v)&=(\mH^{1/2}_e u,\mH^{1/2}_e v)=(\mR_e u,\mR_e v)_{\H_e},\label{e_cholesky2}
\end{align}
where $(\cdot, \cdot)_\x$ generically denotes the scalar product in the Hilbert space $\x$.
The relations (\ref{e_cholesky1}) and (\ref{e_cholesky2}) imply that there exist isometric isomorphisms
$Q_b:\H_b\to\H$ and $Q_e:\H_e\to\H$ such that
$
\mH^{1/2}_b=Q_b\mR_b$, $\mH^{1/2}_e=Q_e\mR_e$,
and in particular
\begin{align*}
(\mH^{1/2}_b u,\mH^{1/2}_b v)&=(Q_b\mR_b u,Q_b\mR_b v)=(\mR_b u,\mR_b v)_{\H_b},\\
(\mH^{1/2}_e u,\mH^{1/2}_e v)&=(Q_e\mR_e u,Q_e\mR_e v)=(\mR_e u,\mR_e v)_{\H_e}.
\end{align*}
We also have for $\mathbf{u}\in\q(\fh_b)$
\begin{align*}
Q^{-1}_b\mH^{1/2}_b\mathbf{u}&=\mR_b\mathbf{u}=\sqrt{E~I}\left(u_2'+\frac{u_1}{R}\right)',\\
Q^{-1}_e\mH^{1/2}_e\mathbf{u}&=\mR_e\mathbf{u}=\sqrt{E~A}\left(u_1'-\frac{u_2}{R}\right)
\end{align*}
and $\mR_b:\q(\fh_b)\to\H_b=L^2[0, l]$ and $\mR_e:\q(\fh_e)\to\H_e=L^2[0, l]$.

Note that $\mH_b$ is not positive definite but $\mH_1$, which is defined by the form
$\fh_1=\fh_b+\fh_e$, is. For the details see \cite{JurakTambaca,Tambaca1D}.
If we were to change the notation we would have to set $\widetilde{\fh}_b:=\fh_1$.
Since this would unnecessarily complicate the exposition we opt not to do so.

We show that
\begin{equation}\label{trece:radiusocjena}
\|(\mH^{1/2}_e\mH^{-1/2}_1)^{\dagger}\|\leq \sqrt{\frac{I+A~R^2}{A~R^2}}
\end{equation}
for our model problem.
Set
$
\fk:=\|(\mH^{1/2}_e\mH^{-1/2}_1)^{\dagger}\|
$
then
$$
\|(\mH^{1/2}_e\mH^{-1/2}_1)^*q_f\|=\sup_{\mathbf{v}\in\q(\fh_b)}
\frac{|(q_f,\mH^{1/2}_e\mathbf{v})|}{\|\mH^{1/2}_1\mathbf{v}\|}\geq \frac{1}{\fk}~\|P_{\q_\infty}q_f\|,
$$
since
$$
\je((\mH^{1/2}_e\mH^{-1/2}_1)^*)=\je(~\overline{\mH^{-1/2}_1\mH^{1/2}_e}~)
=\overline{\je(\mH^{1/2}_e)}=\overline{\q_\infty}^{~_{\|\cdot\|}}.
$$
For $Q^{-1}_eq_f \in L^2[0,l]$ we define
 $\mathbf{v}_0 = (\int_0^{({\cdot})} (Q^{-1}_eq_f) (s) ds, 0)$ (an element of $\q(h_\kappa)$).
For
general $\mathbf{v}$ we have
$$
\|\mH^{1/2}_1\mathbf{v}\|=\Big(E~I\int^l_0 \Big(\big[v_2'+\frac{v_1}{R}\big]'\Big)^2~ds+
E~A\int^l_0 \Big(v_1'-\frac{v_2}{R}\Big)^2~ds\Big)^{1/2}.
$$
Now, set $\mathbf{v}=\mathbf{v}_0$ and compute
$$
\|\mH^{1/2}_1\mathbf{v}_0\|=\frac{\sqrt{E~I+E~A~R^2}}{R}
\|q_f\|.
$$
This establishes
$$
\sup_{\mathbf{v}\in\q(\fh_b)}\frac{|(q_f,\mH^{1/2}_e\mathbf{v})|}{\|\mH^{1/2}_1\mathbf{v}\|}\geq
\frac{|(q_f,\mH^{1/2}_e\mathbf{v}_0)|}{\|\mH^{1/2}_1\mathbf{v}_0\|}\geq
\frac{|(Q^{-1}_eq_f,\mR_e\mathbf{v}_0)_{L^2}|}{\frac{\sqrt{E~I+E~A~R^2}}{R}\|q_f\|}
=\sqrt{\frac{A~R^2}{I+A~R^2}}\|q_f\|,
$$
which completes the proof of (\ref{trece:radiusocjena}).
\subsection{Quantitative (and qualitative) conclusions}\label{sec:qualitative}
The fact (\ref{trece:radiusocjena}) allows us to apply Theorem \ref{t:GT} to obtain precise
estimates for the behavior\footnote{We have tacitly dropped the exponent from $\eta_i(\varepsilon^{-2},\lambda_q^\infty)$
in order to simplify the notation.} of $\eta_i(\varepsilon,\lambda_q^\infty)$.
Since $\mH_1$ and not $\mH_b$ is
the positive definite operator, we will use the rod with the diameter $\varepsilon_0$ as a referent
configuration. We chose
\begin{equation}\label{eq:comput}
\varepsilon_0=\frac{\sqrt{3}}{6}\sqrt{\frac{I+A~R^2}{A~R^2}}\frac{\lambda_{\textsf{sec}}^\infty-\lambda_{\textsf{min}}^\infty}{
\lambda_{\textsf{sec}}^\infty+\lambda_{\textsf{min}}^\infty},
\end{equation}
where $\lambda_{\textsf{sec}}^\infty$ and $\lambda_{\textsf{min}}^\infty$
are the two lowermost eigenvalues of the Curved Rod model and
$\lambda_{\textsf{min}}^{\varepsilon}$ denotes the lowermost eigenvalue of the Arch Rod Model of the rod with
diameter $\varepsilon$.
Theorems \ref{t:Multi}, \ref{t:GT}, Remark
\ref{rem:below} and Corollary \ref{c:sharp}---together with the
observation that $\eta_i(\varepsilon,\lambda^\infty)<1$ for any $\varepsilon>0$---directly imply that
$$
\frac{\lambda_{\textsf{min}}^\infty-\lambda_{\textsf{min}}^{\varepsilon}}{\lambda_{\textsf{min}}^\infty}
\leq\varepsilon^2\frac{4 (I+A~R^2)}{ A~R^2}\frac{
\lambda_{\textsf{sec}}^\infty+\lambda_{\textsf{min}}^\infty}
{\lambda_{\textsf{sec}}^\infty-\lambda_{\textsf{min}}^\infty},\qquad 0<\varepsilon\leq\varepsilon_0.
$$
Furthermore, if we chose
$
\varepsilon_1=\frac{\sqrt{3}}{12}\sqrt{\frac{I+A~R^2}{A~R^2}}\frac{\lambda_{\textsf{sec}}^\infty-\lambda_{\textsf{min}}^\infty}{
\lambda_{\textsf{sec}}^\infty+\lambda_{\textsf{min}}^\infty},
$
then we obtain
\begin{equation}\label{eq:thin}
\varepsilon^2\frac{2 (I+A~R^2)\eta_i(\varepsilon_1,\lambda_{\textsf{min}}^\infty)}{ A~R^2}\leq\frac{\lambda_{\textsf{min}}^\infty-\lambda_{\textsf{min}}^{\varepsilon}}{\lambda_{\textsf{min}}^\infty}
\leq\varepsilon^2\frac{4 (I+A~R^2)}{ A~R^2}\frac{
\lambda_{\textsf{sec}}^\infty+\lambda_{\textsf{min}}^\infty}
{\lambda_{\textsf{sec}}^\infty-\lambda_{\textsf{min}}^\infty},
\end{equation}
$0<\varepsilon\leq\varepsilon_1$. If we are only interested in the upper estimate and we assume that there is $m\in\N$
such that $\lambda_m^\infty<\lambda_{m+1}^\infty$, then we have
\begin{equation}\label{eq:esti}
\frac{\lambda_i^\infty-\lambda_i^\varepsilon}{\lambda_i^\infty}\leq
\frac{3}{{\displaystyle \max_{i=1,...,m}\min_{k\ne
i}\frac{|\lambda_{k}^\infty-\lambda_{i}^\infty|}{\lambda_{k}^\infty+\lambda_{i}^\infty}}}
\frac{4 (I+A~R^2)}{ A~R^2}\varepsilon^2,\qquad i=1,\ldots,m.
\end{equation}
This estimate holds for all $\varepsilon\leq\varepsilon_2$, where $\varepsilon_2$ is defined as the first $\varepsilon$
for which the righthand side of (\ref{eq:esti}) is smaller then $1$.
Estimate (\ref{eq:esti}) can naturally be refined with the use of other $\eta_i(\varepsilon_2,\lambda_j^\infty)$ as
is given by the framework of Theorem \ref{tm:ess}. The optimality of the estimate is meant in the sense
of Theorem \ref{t:exactness}.

\section{Conclusion}
We have presented a constructive approach to spectral asymptotic estimates in the large coupling limit. Although
we have concentrated on a use of this results for theoretical considerations from
\cite{BruneauCarbou,Dancer,DemuthJeskeKirsch,SanchezPalencia90},
they are expected to be particularly useful in a
design of computational procedures for various singularly perturbed spectral problems.
This can be illustrated when comparing the numerical procedures for the Arch Model and the Curved Rod Model.
It has been shown that the Curved Rod Model is
is better behaved, with respect to the finite element approximations than the Arch Model,
see \cite{CurvedRodNumeric}. Furthermore, a qualitative conclusion of Section \ref{sec:qualitative} is
that when interested in the transversal vibrations only, Arch Model can be ignored (up to the corrections
of order $\varepsilon^2$).
For more on the lower dimensional approximations in the theory of
elasticity see \cite{CiarletV4y78,JurakTambaca,SanchezPalencia90,Tambaca1D,TamTeza}.

In a practical computational setting it is not
reasonable to assume that the spectral problem for $\mH_\infty$ will be exactly solvable. We would like
to emphasize that in the design of this theory we have not built the requirement of the explicit
solvability of $\mH_\infty$ into our results.
To be more precise, nowhere in the proofs of Theorems \ref{t:strong} and \ref{t:Multi}
or Corollaries \ref{c:working} and \ref{c:lower} is it necessary to have $\ra(P)=\fE_\infty$.
The only place where this assumption was necessary was to establish that (\ref{eq.sing_val_2})
and (\ref{eq:AppDef}) define the same approximation defects. Theorem \ref{tm:ess} and similar results from
\cite{Gru05_3, Gru_Ves_Sylv}---which are the workhorses of this theory---do not need this assumptions. Subsequently,
 the only limiting factor is the computability of $\eta_i(P)$ and the availability of information
on the distance of $\spec(\Xi_\kappa)$---from Theorems \ref{thm:second} and \ref{tm:ess}---
to the unwanted component of the spectrum.
With this we hope to have illustrated the advantages and limitations of our theory
\subsection*{Acknowledgement}
The author would like to thank V. Enss, Aachen for the support and for many helpful discussions during the
final preparation of this manuscript. The help of J. Tamba\v{c}a, Zagreb in the proof of the upper estimate
in Theorem \ref{t:GT} as well as in the proof of estimate (\ref{trece:radiusocjena}) is gratefully acknowledged.
The use of the technique of Lagrange multipliers in this context I have learned and adapted
from his papers. The author would also like to thank K. Veseli\'{c}, Hagen for
support and encouragement in early phases of the preparation of this manuscript
and for many useful discussions in this and other contexts.

\bibliographystyle{abbrv}
\def\cprime{$'$} \def\cprime{$'$} \def\cprime{$'$}

\end{document}